\newtheorem{theorem}{Theorem}[section]
\newtheorem{definition}[theorem]{Definition}
\newtheorem{corollary}[theorem]{Corollary}
\newtheorem{lemma}[theorem]{Lemma}
\newtheorem{remark}{Remark}[section]
\numberwithin{equation}{section}
\begin{document}
\title[ ]{On a bi-dimensional chemo-repulsion model with nonlinear production}
\author[F. Guill\'en-Gonz\'alez]{Francisco Guill\'en-Gonz\'alez}
\address[F. Guill\'en-Gonz\'alez]{Universidad de Sevilla, Dpto. de Ecuaciones Diferenciales y An\'alisis Num\'erico and IMUS, C/Tarfia, S/N, 41012, Sevilla, Spain.}\email{guillen@us.es}
\thanks{FGG has been partially financed by the Projet PGC2018-098308-B-I00, funded by FEDER / Ministerio de Ciencia e Innovación - Agencia Estatal de Investigación.}
\author[E. Mallea-Zepeda]{Exequiel Mallea-Zepeda}
\address[E. Mallea-Zepeda]{Universidad de Tarapac\'a, Departamento de
Matem\'{a}tica, Av. 18 de septiembre, 2222, Arica, Chile.} \email{emallea@uta.cl}

\thanks{E. Mallea-Zepeda was supported by Proyecto UTA-Mayor 4743-19, Universidad de Tarapac\'a.}
\author[E.J. Villamizar-Roa]{\'Elder J. Villamizar-Roa}
\address[E.J. Villamizar-Roa]{Universidad Industrial de Santander, Escuela de
Matem\'{a}ticas, A.A. 678, Bucaramanga, Colombia.} \email{jvillami@uis.edu.co}
\thanks{E.J. Villamizar-Roa has been supported by Vicerrector\'{\i}a de Investigaci\'on y Extensi\'on of Universidad Industrial de Santander, proyecto de a\~{n}o sab\'atico.}
\date{\today}

\keywords{Chemo-repuslion, nonlinear production, strong solutions, uniqueness, optimal control.} \subjclass[2010]{35Q35; 49J20; 76D55; 35Q92; 76D05} 

\begin{abstract}
In this paper, we study the following parabolic chemo-repulsion  with nonlinear  production model:
$$
\left\{
\begin{array}{rcl}
\partial_tu-\Delta u&=&\nabla\cdot(u\nabla v),\\
\partial_tv-\Delta v+v&=&u^p+fv\, 1_{\Omega_c}.
\end{array}
\right.
$$
This problem is related to a bilinear control problem, 
where the state $(u,v)$ is the cell density and the chemical concentration respectively, and the control $f$ acts in a bilinear form in the chemical equation. For $2D$ domains, we first consider the case of quadratic signal production ($p=2$), proving the existence and uniqueness of global strong state solution for each control,  and 
the existence of global optimum solution. Afterwards, we deduce the optimality system for any local optimum via a Lagrange  multiplier Theorem, proving regularity of the  Lagrange multipliers.  Finally, we consider the case of signal production $u^p$ with $1<p<2$.
\end{abstract}
\maketitle

\section{Introduction}
\label{intro}
 In last years, the understanding of chemotaxis systems has proceeded to propose more elaborated models in order to capture certain types of mechanisms which are relevant in several situations. A particular case of those mechanisms, which is not properly captured by classical chemotaxis models, corresponds to the process of signal production through cells, which may depend on the population density in a nonlinear manner, as for instance, the saturation effects produced  by some bacterial chemotaxis \cite{Hillen}. See also \cite{Maini,Myerscough,Winkler1}. 

\

In this paper we are interested in the mathematical analysis of a bi-dimensional chemo-repulsion model with nonlinear signal production. By chemo-repulsion we mean the biophysical process of the cell movement towards a lower concentration of chemical substance. This model is given by the following parabolic system in $Q:=\Omega\times (0,T)$:
\begin{equation}\label{eq1}
\left\{
\begin{array}{rcl}
\partial_tu-\Delta u&=&\nabla\cdot(u\nabla v),\\
\partial_tv-\Delta v+v&=&u^p+fv\, 1_{\Omega_c}.
\end{array}
\right.
\end{equation}
 Here, $\Omega\subset\mathbb{R}^2$ is a bounded domain and $(0,T)$ a time interval. The unknowns are $u(t,x)\ge0$ and  $v(t,x)\ge 0$ denoting the cell density and the chemical concentration, respectively. The term $u^p,$ $p>1,$ is the nonlinear production and the reaction term $fv\, 1_{\Omega_c}$ can be interpreted as a bilinear control where the control function $f$ acts as a proliferation or degradation coefficient of the chemical substance, on a subdomain $\Omega_c\subseteq \Omega$. 
 System (\ref{eq1}) is completed with the initial and non-flux boundary conditions
 \begin{equation}\label{eq2}
\left\{
\begin{array}{rcl}
u(0,x)=u_0(x)\ge0,\ v(0,x)=v_0(x)\ge 0\mbox{ in }\Omega,\\
\frac{\partial u}{\partial{\bf n}}=0,\ \frac{\partial v}{\partial{\bf n}}=0\mbox{ on }(0,T)\times\partial\Omega,
\end{array}
\right.
\end{equation}
 where ${\bf n}$ is the outward unit normal vector to $\partial\Omega$.\\

The problem (\ref{eq1})-(\ref{eq2}) but with linear production term and without control ($f \equiv 0$) was studied by Cieslak et al.~in \cite{cieslak}. The authors proved that in 2D bounded domains, this problem has a unique global regular solution, and in spaces of  dimension 3 and 4, one only has  global existence of weak solutions. Even in the linear production case, Tao \cite{tao} considering a smooth bounded convex domain $\Omega$ and a general density-dependent chemotactic sensitivity function, that is, considering in (\ref{eq1}) the term $\nabla \cdot(\chi(u)\nabla v)$ in place of $\nabla \cdot(u\nabla v),$ and $f\equiv 0,$ proved existence (and uniqueness) of global classical  solutions for smooth positive initial data which are  uniformly-in-time bounded. Moreover, it was shown that for any given initial data $(u_0,v_0)$, the corresponding solution $(u,v)$ of (\ref{eq1})-(\ref{eq2}) converges to $(\overline u_0,\overline u_0)$ as time goes to infinity, where  $\overline{u}_0:=\frac{1}{|\Omega|}\int_{\Omega}u_0.$ However,
as far as we know, in the case of chemo-repulsion model with nonlinear signal production (\ref{eq1})-(\ref{eq2}) (in particular, the quadratic production)
the literature is very scarce.  We only known the results of \cite{Diego1,Diego2,Diego3}; in  \cite{Diego1,Diego2} the authors prove the existence of global weak solutions for both two and three dimensions of (\ref{eq1})-(\ref{eq2}) in the quadratic case, with $f\equiv 0$, and global in time strong regularity of the model assuming a regularity criteria, which is satisfied in 2D domains. They also develop some numerical schemes to approximate  weak solutions of (\ref{eq1})-(\ref{eq2}). In \cite{Diego3} the authors proved a result of existence of weak solutions for (\ref{eq1})-(\ref{eq2}) in the superlinear case $1<p<2$ with $f\equiv 0$ and non-negative initial data $(u_0,v_0)\in L^p(\Omega)\times H^1(\Omega).$ That result was obtained as the limit of a sequence of regularized problems in order to deal with the chemotaxis term.  The authors also propose some fully discrete Finite
Element  approximations of problem  (\ref{eq1})-(\ref{eq2}).\\

In addition to the existence and uniqueness of strong solutions $(u,v)$ for the system (\ref{eq1})-(\ref{eq2}), we are interested in the mathematical analysis of an optimal control problem with state equations given by (\ref{eq1})-(\ref{eq2}). We consider a bilinear control problem where the control $f$ acts injecting or extracting chemical substance on a subdomain of control $\Omega_c$.  Controlling the proliferation and death of cells in several environments have important applications in biological processes, as for instance in the formation of bacterial patterns \cite{tTyson,Woodward} or the movement and growth of endothelial cells in response to the  chemical signal known as the tumor angiogenesis factor (TAF), which have an important role in the process of invasion of cancer cells to neighboring tissue \cite{Chaplain,Chaplain1,Mantzaris}. Precisely, we wish to minimize the functional $J$ defined by
\begin{eqnarray}\label{eq1-1}
J(u,v,f)&:=&\frac{\gamma_u}{2}\int_0^T\int_{\Omega_d}|u(t,x)-u_d(t,x)|^2+\frac{\gamma_v}{2}\int_0^T\int_{\Omega_d} |v(t,x)-v_d(t,x)|^2\nonumber\\
&&+\frac{\gamma_f}{2+}\int_0^T\int_{\Omega_c}|f(t,x)|^{2+},
\end{eqnarray}
where $(u,v,f)$ is a strong solution of system (\ref{eq1})-(\ref{eq2}). The functions
$(u_d,\, v_d)\in L^2(Q_d)\times L^2(Q_d)$ are given and represent the desired states with $Q_d=(0,T)\times \Omega_d$ and $\Omega_d\subset \Omega$ the observability subdomain, and the nonnegative reals $\gamma_u,\gamma_v$ and $\gamma_f$ measures
the cost for the states and control, respectively. In (\ref{eq1-1}) the notation $2+$ means $2+\varepsilon,$ for some $\varepsilon>0$ arbitrary. 
Then, the functional $J$ defined in (\ref{eq1-1}) describes the deviation of  the state $(u,v)$ from a desired state $(u_d,v_d)$, plus the cost of the control measured in a given $L^{2+}(\Omega)$-norm.\\

The task of this paper is the following. We first prove the existence and uniqueness of strong solutions $(u,v)$ for the system (\ref{eq1})-(\ref{eq2}) in the quadratic case $p=2,$ for any control $f$, and posteriorly, we  prove the existence of an optimal solution for the related optimal bilinear control problem. Based on a Lagrange multipliers theorem we also obtain the first-order optimality conditions related to any local optimum.
After that,  a regularity result  for the associated Lagrange multipliers is deduced. Finally, we analyze the case of nonlinear signal production $u^p$ for $1<p<2.$ The key point in our analysis is to control the time-derivative of $(\int_\Omega v)^2$ at the same time that the energy, which implies the restriction $p\leq 2.$ Thus, the corresponding existence analysis in the superquadratic case $u^p$ for $p>2$ remains as an  open problem.\\

Not much is known about control problems with state equations given by chemotaxis models (see \cite{dearaujo,fister_mccarthy,rodriguez_rueda,ryu_yagi}).
In \cite{dearaujo} the authors study a distributed optimal control for a two-dimensional  model of cancer invasion; the authors prove the existence of optimal solution and derive an optimality system. In one-dimensional domains, 
two extreme problems  on a chemoattractant  model are analyzed in \cite{fister_mccarthy}; the first one involving harvesting the actual cells and the second one depicts removing a proportion of the chemical substance. 
They prove the existence of optimal solutions and derive an optimality system. 
In \cite{ryu_yagi}, the authors studied an optimal control problem related to the Keller-Segel system to describe the aggregation
process of the cellular slime molds by chemical attraction (see also \cite{Ryu}). In \cite{rodriguez_rueda}, the authors analyze a distributive optimal control problem where the state equations are given
by a stationary chemo-atraction model coupled with the Navier-Stokes equations; the system is controlled through a distributed force and a coefficient of chemotactic sensitivity. The auhors prove existence of optimal solution, and derive some optimality conditions. Recently, in \cite{Exequiel2D,Exequiel3D} the authors studied a bilinear optimal control problem associated to a chemo-repulsion model with linear production term in a bidimensional and a three dimensional bounded domain, respectively. In any case, 
as far as we know, optimal control problems associated with the chemo-repulsion model with nonlinear production term has not been considered in the literature.\\ 

The outline of this paper is as follows:
In Section \ref{sec:2}, we fix the notation, introduce the functional spaces to be used, and give the definition of strong solution for
system (\ref{eq1})-(\ref{eq2}); we also set a Douglas-Niremberg-type inequality which is crucial in our analysis, and 
establish the main results for the quadratic case. In Section
\ref{sec:3}, we prove the existence and uniqueness of strong solution of (\ref{eq1})-(\ref{eq2}).
In Section \ref{sec:4} we analyze the bilinear optimal control problem, including the existence 
of optimal solution, the derivation of first-order optimality conditions, and some extra regularity for the Lagrange multipliers. Finally, in Section \ref{sec:5} we discuss the nonlinear production case $u^p,$ for $1<p<2.$

\section{Preliminaries and main results}
\label{sec:2}

We start establishing some basic notations to be used from now on. Hereafter, $\Omega$ is a bounded domain of $\mathbb{R}^2$ with boundary of class $C^{2,1}.$ We use the Sobolev space $W^{k,q}(\Omega)$ and $L^q(\Omega),$ $k\in \mathbb{R},$ $1\leq q\leq \infty,$ with norms  $\Vert
\cdot\Vert_{W^{k,q}}$ and $\Vert\cdot \Vert_{L^q}$ respectively. When $q=2,$ we write $H^k(\Omega):=W^{k,2}(\Omega)$ and its norm will be denoted by $H^k(\Omega).$ The inner
product in $L^2(\Omega)$ will be represented by $(\cdot,\cdot)$ and the norm by $\|\cdot\|$.  We also consider the space $W_{\bf n}^{m,q}(\Omega)=\{u\in W^{m,q}(\Omega)\,:\, \frac{\partial u}{\partial{\bf n}}=0\mbox{ on }\partial\Omega\}$ ($m>1+1/q$), with norm denoted
by $\|\cdot\|_{W^{m,q}_{\bf n}}$. If $X$ is a Banach space, we denote by $L^q(0,T;X)$ 
the space of valued functions in $X$ defined on the interval
$[0,T]$ that are integrable in the Bochner sense, and its norm will be denoted by $\|\cdot\|_{L^q(X)}$. For  simplicity we denote
$L^q(Q):=L^q(0,T;L^q(\Omega)).$ 
Also, we denote by $C([0,T];X)$ the space of continuous functions from $[0,T]$ into a Banach space
$X$, and its norm by $\|\cdot\|_{C(X)}$.
The  topological dual space of a Banach space $X$ will be denoted by $X'$,
and the duality for a pair $X$ and $X'$ by $\langle\cdot,\cdot\rangle_{X'}$ or simply
by $\langle\cdot,\cdot\rangle$ unless this leads to ambiguity. Moreover, the letters $C$ will denote diverse positive constants which may change from line to line or even within a same line.\\

We will use  the Gagliardo-Nirenberg inequality in $nD$ domains (see \cite{nirenberg}):
\begin{lemma}\label{gagl}
Let $1\le s,t\le+\infty$ such that
$$
\frac1r=\frac{a}{t_*}+\frac{1-a}s,\ a\in[0,1], \quad   \frac1{t_*}=\frac1t-\frac1n.
$$
Then
\begin{equation}\label{gagl-1}
\|u\|_{L^r}\le C_1\|\nabla u\|^a_{L^t}\|u\|^{1-a}_{L^s}+C_2\|u\|_{L^{\tilde{s}}},
\end{equation}
where $C_1$ and $C_2$ are constants which depend on $\Omega$, $t$ and $s$, and
$\tilde{s}>0$ is arbitrary.
\end{lemma}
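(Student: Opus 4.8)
The plan is to obtain \eqref{gagl-1} from the classical Gagliardo--Nirenberg interpolation theorem on $\mathbb{R}^n$ (which is essentially the content of \cite{nirenberg}) and then transplant it to the bounded domain $\Omega$, the additive correction $C_2\|u\|_{L^{\tilde s}}$ being the price of passing from a homogeneous inequality on the whole space to a non-homogeneous one on $\Omega$. Concretely, on the whole space I would first record the Sobolev--Gagliardo estimate $\|w\|_{L^{t_*}(\mathbb{R}^n)}\le C\|\nabla w\|_{L^t(\mathbb{R}^n)}$ for $w\in C_c^\infty(\mathbb{R}^n)$ and $1\le t<n$, with $\tfrac1{t_*}=\tfrac1t-\tfrac1n$; this comes from the iterated one-dimensional estimate together with H\"older's inequality when $t=1$, and from applying that case to $|w|^{\gamma}$ with $\gamma=\tfrac{t(n-1)}{n-t}$ when $1<t<n$. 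Interpolating by H\"older between the exponents $s$ and $t_*$, namely $\|w\|_{L^r}\le\|w\|_{L^{t_*}}^{a}\|w\|_{L^{s}}^{1-a}$ with $\tfrac1r=\tfrac{a}{t_*}+\tfrac{1-a}{s}$, then gives $\|w\|_{L^r(\mathbb{R}^n)}\le C\|\nabla w\|_{L^t(\mathbb{R}^n)}^{a}\|w\|_{L^{s}(\mathbb{R}^n)}^{1-a}$; the remaining configurations ($t\ge n$, or the limiting value $r=\infty$) are dealt with by the corresponding embeddings, excluding as usual the borderline case $t=n$, $a=1$. A density argument extends this to all $w\in W^{1,t}(\mathbb{R}^n)\cap L^{s}(\mathbb{R}^n)$.

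Next I would transplant the estimate. Let $P\colon W^{1,t}(\Omega)\to W^{1,t}(\mathbb{R}^n)$ be a bounded linear (Stein-type) extension operator, chosen so that it is also bounded $L^{s}(\Omega)\to L^{s}(\mathbb{R}^n)$ and has range contained in a fixed bounded set. Applying the whole-space inequality to $Pu$ and using $\|u\|_{L^r(\Omega)}\le\|Pu\|_{L^r(\mathbb{R}^n)}$, $\|\nabla Pu\|_{L^t(\mathbb{R}^n)}\le C(\|\nabla u\|_{L^t(\Omega)}+\|u\|_{L^t(\Omega)})$ and $\|Pu\|_{L^{s}(\mathbb{R}^n)}\le C\|u\|_{L^{s}(\Omega)}$, one gets
\begin{equation*}
\|u\|_{L^r(\Omega)}\le C\|\nabla u\|_{L^t(\Omega)}^{a}\|u\|_{L^{s}(\Omega)}^{1-a}+C\|u\|_{L^t(\Omega)}^{a}\|u\|_{L^{s}(\Omega)}^{1-a};
\end{equation*}
the weighted arithmetic--geometric inequality $x^{a}y^{1-a}\le ax+(1-a)y$ and the inclusions $L^{q}(\Omega)\hookrightarrow L^{q'}(\Omega)$ for $q'\le q$, valid on the bounded domain $\Omega$, reduce the last summand to $C\|u\|_{L^{\max\{t,s\}}(\Omega)}$. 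This already proves \eqref{gagl-1} for every $\tilde s\ge\max\{t,s\}$. (Equivalently, one may replace the global extension by a finite cover of $\overline\Omega$ by balls with subordinate cut-off functions $\phi_i$ and apply the whole-space inequality to each $\phi_i u$; the derivatives of the $\phi_i$ generate exactly the same lower-order term.)

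Finally, for an arbitrarily small $\tilde s\in(0,\max\{t,s\})$ one lowers the exponent in the residual term by interpolating $\|u\|_{L^{\max\{t,s\}}(\Omega)}$, on the bounded domain, between $L^{\tilde s}(\Omega)$ and a higher Lebesgue space to which the inequality just obtained applies, and then absorbing by Young's inequality the contributions that reappear; a finite number of such steps brings the correction down to $C\|u\|_{L^{\tilde s}(\Omega)}$. I expect this last point --- the uniform control of the lower-order term for \emph{every} $\tilde s>0$, together with the careful handling of the exceptional Sobolev situations (for instance $t=n$ with $r=\infty$, or $t>n$), where $a$ cannot be pushed to $1$ --- to be the only genuinely delicate part, and it is precisely here that I would invoke the sharp statement of \cite{nirenberg} rather than redo its bookkeeping.
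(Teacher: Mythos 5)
Your argument is correct, but it follows a different (and heavier) route than the one the paper has in mind. The paper does not actually prove Lemma \ref{gagl}: it cites \cite{nirenberg} and, in the sentence immediately after the statement, indicates that the inequality is the concatenation of three facts \emph{intrinsic to the bounded domain}: the Sobolev embedding $W^{1,t}(\Omega)\hookrightarrow L^{t_*}(\Omega)$, the H\"older interpolation $\|u\|_{L^r}\le\|u\|_{L^{t_*}}^{a}\|u\|_{L^{s}}^{1-a}$, and the equivalent norm $\|u\|_{W^{1,t}}\equiv\|\nabla u\|_{L^t}+\left|\int_\Omega u\right|$ (the $t=2$ instance being \eqref{norm-1}). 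Chaining these and using $(x+y)^a\le x^a+y^a$ yields \eqref{gagl-1} directly, with the additive term appearing as $\left|\int_\Omega u\right|^{a}\|u\|_{L^s}^{1-a}\le C\|u\|_{L^1}+C\|u\|_{L^s}$ by Young; since $\|u\|_{L^1}\le C\|u\|_{L^{\tilde s}}$ on a bounded domain for $\tilde s\ge1$, this already covers the two instances \eqref{G-N-2} ($\tilde s=s=1$) and \eqref{G-N-4} ($\tilde s=s=2$) that the paper actually uses. You instead prove the homogeneous whole-space inequality from scratch and transplant it with a Stein extension (or a partition of unity); this buys self-containedness --- you do not take the Sobolev embedding on $\Omega$ as a black box --- at the cost of the extension-operator machinery and a lower-order term $\|u\|_{L^{\max\{t,s\}}}$ that then has to be pushed down. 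Both routes share the same residual delicacy: getting the correction term in an \emph{arbitrary} $L^{\tilde s}$-norm with $\tilde s$ small requires a further interpolation-and-absorption step (or the sharp statement of \cite{nirenberg}), and your handling of that point, while only sketched, is the standard one and is sound.
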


This Lemma can be seen as the concatenation of some properties; the Sobolev embedding $W^{1,t}$ in $L ^{t_*}$, the interpolation between $L^q$ spaces and the equivalent norm in $W^{1,t}$ given by $\|\nabla u\|_{L^t} + |\int_\Omega u|$.
In particular, we will use Lemma \ref{gagl} for $t=r=n=2$ and $s=1$, hence $a=1/2$, arriving at the inequality
\begin{equation}\label{G-N-2}
\|u\|^2\le C(\|\nabla u\|\|u\|_{L^1}+\|u\|_{L^1}^2),
\end{equation}
and for $r=4$ and $t=n=s=2$, hence $a=1/2$, that means the inequality
\begin{equation}\label{G-N-4}
\|u\|_{L^4}^2\le C(\|\nabla u\|\|u\|+\|u\|^2 ),
\end{equation}
which is a generalized version of the well-known $2D$ interpolation inequality
\begin{equation}\label{2D-4}
\|u\|_{L^4}^2\le C \| u\|_{H^1} \|u\| \quad \forall u\in H^1(\Omega).
\end{equation}

Also, throughout this paper we will use the following equivalent norms in $H^1(\Omega)$ and $H^2(\Omega)$ 
(see \cite{necas} for more details):
\begin{eqnarray}
\|u\|^2_{H^1}&\equiv& \|\nabla u\|^2+\left(\int_\Omega u\right)^2\quad \forall \,u\in H^1(\Omega),\label{norm-1}\\
\|u\|^2_{H^2}&\equiv& \|\Delta u\|^2+\left(\int_\Omega u\right)^2\quad \forall \, u\in H^2_{\bf n}(\Omega),\label{norm-2}
\end{eqnarray}
and the following Banach spaces
\begin{eqnarray*}
X_2&:=&\left\{u\in C([0,T];H^1(\Omega))\cap L^2(0,T;H^2)\,:\, \partial_tu\in L^2(Q)\right\},\\
X_{2+}&:=&\left\{u\in C([0,T];{W}^{1+,2+}(\Omega))\cap L^{2+}(0,T;W^{2,2+})\,:\,\partial_tu\in L^{2+}(Q)\right\}.
\end{eqnarray*}

We will study a control problem associated to strong solutions of system (\ref{eq1})-(\ref{eq2}). In the following definition
we give the concept of strong solutions of problem (\ref{eq1})-(\ref{eq2}) in the case of quadratic signal production $p=2.$

\begin{definition}\label{strong-solution}
Let $f\in L^{2+}(Q_c):=L^{2+}(0,T;L^{2+}(\Omega_c))$, $(u_0,v_0)\in H^{1}(\Omega)\times W^{1+,2+}(\Omega)$, with
$u_0\ge0$ and $v_0\ge 0$ a.e. in $\Omega$. A pair $(u,v)$ is called strong solution of system
(\ref{eq1})-(\ref{eq2}) in $(0,T),$ with $p=2$, if $u\ge 0$ and $v\ge 0$ in $Q$,
\begin{equation}
(u,v)\in X_2\times X_{2+},\label{st-1}\\
\end{equation}
the system (\ref{eq1}) is satisfied pointwisely a.e.~$(t,x)\in Q,$
and the initial and boundary conditions, given in (\ref{eq2}), are veryfied.
\end{definition}
\begin{remark}\label{conser}
The problem (\ref{eq1})-(\ref{eq2}) is conservative in $u$. Indeed, integrating $(\ref{eq1})_1$ in $\Omega$ we have
\begin{equation}\label{conser-1}
\frac{d}{dt}\left(\int_\Omega u\right)=0\, \Rightarrow\, \int_\Omega u(t)=\int_\Omega u_0:= m_0\ \forall t>0.
\end{equation}
Moreover, integrating (\ref{eq1})$_2$ in $\Omega$ we obtain
\begin{equation}\label{conser-2}
\frac{d}{dt}\left(\int_\Omega v\right)+\int_\Omega v=\int_\Omega u^2+\int_{\Omega_c} fv.
\end{equation}
\end{remark}
Our main existence result is given by the following theorem
\begin{theorem}\label{existence}
Under hypothesis of Definition \ref{strong-solution}, 
there exists a unique strong solution of system
(\ref{eq1})-(\ref{eq2}), with $p=2,$  in sense of Definition \ref{strong-solution}. Moreover, there exists 
a  constant $$K:=K(T,\|u_0\|_{H^1},\|v_0\|_{W^{1+,2+}},\|f\|_{L^{2+}(Q_c)})>0$$ such that
\begin{equation}\label{bound}
\|(u,v)\|_{X_2\times X_{2+}}\le K.
\end{equation}
\end{theorem}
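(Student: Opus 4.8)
The strategy is the classical three-step scheme for strong solutions of chemotaxis-type systems: (i) establish \emph{a priori} energy estimates that are independent of a regularization/Galerkin parameter, (ii) bootstrap these into the $X_2\times X_{2+}$ regularity, (iii) obtain existence by passing to the limit in a Galerkin (or regularized) approximation, and finally (iv) prove uniqueness by an energy estimate on the difference of two solutions. Throughout, the two conservation identities of Remark~\ref{conser} are the backbone: \eqref{conser-1} fixes $\int_\Omega u\equiv m_0$, which makes \eqref{norm-1}--\eqref{norm-2} usable with the zero-average parts, and \eqref{conser-2} controls $\int_\Omega v$.

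First I would derive the basic energy law. Testing $(\ref{eq1})_1$ by $u$ and $(\ref{eq1})_2$ by $-\Delta v$ (the standard choice that cancels the cross term $\int_\Omega \nabla\cdot(u\nabla v)\,u$ against $\int_\Omega \nabla u^2\cdot\nabla v$, using $p=2$ so that $u^p=u^2$), one obtains a differential inequality of the form
\begin{equation*}
\frac{d}{dt}\Big(\tfrac12\|u\|^2+\tfrac12\|\nabla v\|^2\Big)+\|\nabla u\|^2+\|\Delta v\|^2+\|\nabla v\|^2 \le \big|\!\int_\Omega \nabla u^2\cdot\nabla v\big| + \big|\!\int_{\Omega_c} fv\,\Delta v\big|.
\end{equation*}
The first term on the right is $2\int_\Omega u\,\nabla u\cdot\nabla v$, which after Cauchy--Schwarz, \eqref{G-N-4} for $\|u\|_{L^4}$ and $\|\nabla v\|_{L^4}$, and the equivalent $H^2$-norm, is absorbed by $\varepsilon(\|\nabla u\|^2+\|\Delta v\|^2)$ plus lower-order terms in $\|u\|^2,\|\nabla v\|^2$; the bilinear term is handled by Hölder and interpolation using $f\in L^{2+}$. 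Here is where the restriction $p\le 2$ enters (for $p>2$ the term $\int \nabla u^p\cdot\nabla v = p\int u^{p-1}\nabla u\cdot\nabla v$ cannot be closed). As the authors indicate, a second estimate controlling $\frac{d}{dt}\big(\int_\Omega v\big)^2$ simultaneously with the energy (differentiating \eqref{conser-2}, or squaring it and using \eqref{G-N-2} on $\int_\Omega u^2=\|u\|^2$) is needed to close the $H^1$-bound on $v$ via \eqref{norm-1}. Gronwall then yields $u\in L^\infty(0,T;L^2)\cap L^2(0,T;H^1)$ and $v\in L^\infty(0,T;H^1)\cap L^2(0,T;H^2)$.

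Next, the bootstrap to full $X_2\times X_{2+}$ regularity. From the bounds just obtained and the 2D embeddings, the right-hand side $\nabla\cdot(u\nabla v)$ of $(\ref{eq1})_1$ lies in $L^2(Q)$ (using $\|u\nabla v\|_{L^2}\le \|u\|_{L^4}\|\nabla v\|_{L^4}$ and the already-controlled $H^2$-norm of $v$); parabolic regularity for the Neumann heat equation then gives $u\in X_2$, i.e. $u\in C([0,T];H^1)\cap L^2(0,T;H^2)$ with $\partial_t u\in L^2(Q)$. With $u\in L^\infty(0,T;H^1)\hookrightarrow L^\infty(0,T;L^q)$ for every $q<\infty$, the source $u^2+fv\,1_{\Omega_c}$ belongs to $L^{2+}(Q)$ (the term $u^2\in L^\infty(0,T;L^q)$ for large $q$, and $fv\in L^{2+}$ because $v\in L^\infty(0,T;H^1)\hookrightarrow L^\infty(0,T;L^\infty{}^-)$ and $f\in L^{2+}$); then maximal $L^{2+}$-regularity for $\partial_t v-\Delta v+v=\cdots$ with $v_0\in W^{1+,2+}$ gives $v\in X_{2+}$. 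Nonnegativity of $u$ and $v$ follows from the maximum principle/Stampacchia truncation applied to the approximate problems and preserved in the limit (for $u$: test by $u^-$; for $v$: the source is nonnegative once $u\ge0$). Finally, uniqueness: given two solutions $(u_1,v_1),(u_2,v_2)$, set $u=u_1-u_2$, $v=v_1-v_2$, test the difference equations by $u$ and $-\Delta v$, and estimate the differences $\nabla\cdot(u_1\nabla v_1)-\nabla\cdot(u_2\nabla v_2)$ and $u_1^2-u_2^2=(u_1+u_2)u$ using the $X_2\times X_{2+}$ bounds (which provide $L^\infty$-in-time control of $\|u_i\|_{H^1}$, $\|v_i\|_{W^{1,2+}}$) together with \eqref{G-N-4}; Gronwall forces $u\equiv v\equiv 0$.

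The main obstacle is step one, specifically closing the energy estimate: the cross term $2\int_\Omega u\,\nabla u\cdot\nabla v$ is borderline in 2D and is only tamed by carefully coupling the energy with the evolution of $\big(\int_\Omega v\big)^2$ — i.e. one must run a combined Gronwall argument on $E(t)+\big(\int_\Omega v(t)\big)^2$ rather than on $E(t)$ alone, and it is exactly this coupling that forces $p\le 2$. The rest (parabolic bootstrap, nonnegativity, uniqueness) is standard once the first estimate is in hand. The constant $K$ in \eqref{bound} is then read off from the Gronwall constants, which depend only on $T$, $\|u_0\|_{H^1}$, $\|v_0\|_{W^{1+,2+}}$ and $\|f\|_{L^{2+}(Q_c)}$ as claimed.
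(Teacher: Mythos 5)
Your overall architecture (a priori estimates, bootstrap, limit passage, uniqueness by differences) matches the paper in spirit, and you correctly identify the two load-bearing ideas of the basic energy estimate: the exact cancellation between the chemotaxis term and the quadratic production term, and the need to couple the energy with the evolution of $\bigl(\int_\Omega v\bigr)^2$ via \eqref{G-N-2} and mass conservation. The existence mechanism differs (you propose Galerkin/regularization; the paper runs Leray--Schauder on the decoupled truncated system \eqref{sys-1}, which makes the sign truncations $\bar u_+,\bar v_+$ and the linear parabolic regularity steps clean), but that choice is not essential. However, your write-up of the energy step is internally inconsistent: after correctly noting the cancellation, your displayed inequality retains $\bigl|\int_\Omega \nabla u^2\cdot\nabla v\bigr|$ on the right and you claim it can be ``absorbed by $\varepsilon(\|\nabla u\|^2+\|\Delta v\|^2)$ plus lower-order terms.'' It cannot: that term is scaling-critical in 2D (after \eqref{G-N-4} and Young one is left with a product of $\|u\|_{H^1}^{3/2}\|v\|_{H^2}^{1/2}$ against factors whose Young exponents already sum to one), and the only way through is the exact cancellation with the weights $(u,-\tfrac12\Delta v)$ — not $(u,-\Delta v)$, which leaves a critical remainder $+\int_\Omega u\nabla u\cdot\nabla v$. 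Also, your justification of $v\ge 0$ (``the source is nonnegative once $u\ge0$'') is wrong since $f$ has no sign; one must test by $v_-$ and exploit $v_+v_-\equiv 0$ (or control $\int_{\Omega_c}f v_-^2$ by Gronwall).

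The genuine gap is in the bootstrap, where your ordering does not close. From Step 2 one only has $u\in L^\infty(L^2)\cap L^2(H^1)\hookrightarrow L^4(Q)$ and $\Delta v\in L^2(Q)$; to apply maximal parabolic regularity to \eqref{eq1}$_1$ and conclude $u\in X_2$ you need $\nabla\cdot(u\nabla v)=u\Delta v+\nabla u\cdot\nabla v\in L^2(Q)$, and with the information available at that stage $u\Delta v$ is only in $L^{4/3}(Q)$ — having $u\nabla v\in L^2(Q)$ only buys weak (divergence-form) regularity, not $X_2$. You then invoke $u\in L^\infty(0,T;H^1)$ to control $u^2$ in $L^{2+}(Q)$, i.e.\ you use the conclusion before it is established. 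The missing ingredient is the paper's Step 3: a Moser-type iteration (testing \eqref{es-2}$_1$ by $u^{q-1}$ and using \eqref{2D-4} together with $\nabla v\in L^4(Q)$) giving $u$ bounded in $L^\infty(L^{\infty-})$; only then does $u^2+fv1_{\Omega_c}\in L^{2+}(Q)$ yield $v\in X_{2+}$ (so $\Delta v\in L^{2+}(Q)$ and $\nabla v\in L^{4+}(Q)$), and only then does testing by $-\Delta u$ close the $H^1$--$H^2$ estimate for $u$ via \eqref{cot-5}--\eqref{cot-6}. Without that intermediate $L^\infty(L^{\infty-})$ bound, neither the $v\in X_{2+}$ step nor the $u\in X_2$ step can be justified, so as written the proof of \eqref{bound} does not go through.
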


Now, in order to establish the statement of the bilinear control problem, let  
$\mathcal{F}\subset L^{2+}(Q_c)$ be a nonempty, closed and convex set, 
where $\Omega_c\subset\Omega$ is the control domain, and $\Omega_d\subset\Omega$ is the observability  domain. We assume the data 
$u_0\in H^{1}(\Omega)$, $v_0\in W^{1+,2+}(\Omega)$ with $u_0\ge 0$ and $v_0\ge 0$ in $\Omega$, 
 and  the function $f\in\mathcal{F}$ that describes the bilinear control acting 
on the  $v$-equation.  We consider the  Banach spaces
\begin{equation}\label{opt-1}
X:={X}_2\times{X}_{2+}\times L^{2+}(Q_c),\quad Y:=L^2(Q)\times L^{2+}(Q),
\end{equation}
 the functional $J:X\rightarrow\mathbb{R}$ and the operator $G=(G_1,G_2):X\rightarrow Y$, which at each point 
$(u,v,f)\in X$ are defined by 
\begin{eqnarray}\label{opt-2}
J(u,v,f)&:=&\frac{\gamma_u}{2}\int_0^T\int_{\Omega_d}|u(t,x)-u_d(t,x)|^2+\frac{\gamma_v}{2}\int_0^T\int_{\Omega_d}|v(t,x)-v_d(t,x)|^2\nonumber\\
&&+\frac{\gamma_f}{2+}\int_0^T\int_{\Omega_c}|f(t,x)|^{2+},
\end{eqnarray}
and 
\begin{equation}\label{opt-3}
\left\{
\begin{array}{rcl}
G_1(u,v,f)&=&\partial_tu-\Delta u-\nabla\cdot(u\nabla v),\\
G_2(u,v,f)&=&\partial_tv-\Delta v+v-u^2-fv1_{\Omega_c}.
\end{array}
\right.
\end{equation}
Then, the optimal control problem which we will analyze reads:
\begin{equation}\label{opt-4}
\min_{(u,v,f)\in M}J(u,v,f)\ \mbox{ subject to }\ G(u,v,f)={ 0},
\end{equation}
where 
$$
M:=(\hat{u},\hat{v},\hat{f})+\widehat{X}_2\times\widehat{X}_{2+}\times(\mathcal{F}-\hat{f}),
$$
with $(\hat{u},\hat{v})$ the global strong solution of (\ref{eq1})-(\ref{eq2}) associated to 
$\hat{f}\in\mathcal{F}$ and  
$$
\widehat{X}_2:=\{u\in X_2\,:\, u(0)=0\},\quad \widehat{X}_{2+}:=\{u\in X_{2+}\,:\, u(0)=0\}.
$$
We recall that $u_d,v_d\in L^2(Q_d):=L^2(0,T;L^2(\Omega_d))$ represents the desired states and the
reals $\gamma_u,\gamma_v,\gamma_f$ are nonnegative (not all zero), which measure the cost of the states and control, respectively.\\

 Notice that $M$ is a closed and convex subset of $X\times X\times L^{2+}(Q_c)$. The set of admissible solutions of optimal control problem (\ref{opt-4})
is 
\begin{equation}\label{opt-5}
\mathcal{S}_{ad}=\{x=(u,v,f)\in M\,:\, G(x)={ 0}\}.
\end{equation}

The existence of global optimal solution is given by the following theorem.
\begin{theorem}[Global optimal solution] \label{optimal_solution}
Under hypothesis of Theorem \ref{existence}, if 
we assume that either $\gamma_f>0$ or $\mathcal{F}$
is bounded in  $L^{2+}(Q_c)$, then the optimal control problem (\ref{opt-4}) hast at least one global optimal
solution, that is, there exists $(\tilde{u},\tilde{v},\tilde{f})\in\mathcal{S}_{ad}$ such that
\begin{eqnarray}\label{opsol}
J(\tilde{u},\tilde{v},\tilde{f})=\min_{(u,v,f)\in\mathcal{S}_{ad}}J(u,v,f).
\end{eqnarray}
\end{theorem}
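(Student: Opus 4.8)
The plan is to use the direct method of the calculus of variations. First I would note that $\mathcal{S}_{ad}$ is nonempty: for any $\hat f\in\mathcal{F}$, Theorem \ref{existence} provides a (unique) strong solution $(\hat u,\hat v)$, so that $(\hat u,\hat v,\hat f)\in\mathcal{S}_{ad}$. Since $J\ge 0$ on $\mathcal{S}_{ad}$, the infimum $\mu:=\inf_{\mathcal{S}_{ad}}J$ is a finite nonnegative number, and we may pick a minimizing sequence $\{(u_m,v_m,f_m)\}_{m\in\mathbb{N}}\subset\mathcal{S}_{ad}$ with $J(u_m,v_m,f_m)\to\mu$.

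Next I would extract uniform bounds. For the control: if $\gamma_f>0$, the term $\frac{\gamma_f}{2+}\|f_m\|_{L^{2+}(Q_c)}^{2+}$ is bounded along the minimizing sequence, hence $\{f_m\}$ is bounded in $L^{2+}(Q_c)$; if instead $\mathcal{F}$ is bounded in $L^{2+}(Q_c)$, the bound is immediate since $f_m\in\mathcal{F}$. In either case $\{f_m\}$ is bounded in $L^{2+}(Q_c)$. For the states: since each $(u_m,v_m)$ is the strong solution associated with $f_m$, estimate \eqref{bound} of Theorem \ref{existence} gives $\|(u_m,v_m)\|_{X_2\times X_{2+}}\le K(T,\|u_0\|_{H^1},\|v_0\|_{W^{1+,2+}},\|f_m\|_{L^{2+}(Q_c)})$; because the $L^{2+}(Q_c)$-norms of $f_m$ are uniformly bounded and $K$ depends continuously (and monotonically) on that norm, $\{(u_m,v_m)\}$ is bounded in $X_2\times X_{2+}$.

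Then I would pass to the limit. Up to a subsequence, $f_m\rightharpoonup\tilde f$ weakly in $L^{2+}(Q_c)$; since $\mathcal{F}$ is closed and convex, it is weakly closed, so $\tilde f\in\mathcal{F}$. Using the bounds in $X_2\times X_{2+}$ together with Aubin–Lions–Simon compactness (the spaces $X_2$, $X_{2+}$ embed compactly into, e.g., $L^2(0,T;H^1(\Omega))$ and into $C([0,T];L^{2}(\Omega))$, and into $L^2(Q)$ strongly), I can assume $u_m\rightharpoonup\tilde u$, $v_m\rightharpoonup\tilde v$ weakly in $X_2$, $X_{2+}$ respectively, with strong convergence $u_m\to\tilde u$, $v_m\to\tilde v$, $\nabla v_m\to\nabla\tilde v$ in $L^2(Q)$ (and a.e. in $Q$, after a further subsequence). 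The strong/weak convergences suffice to pass to the limit in each term of $G$: $\partial_t u_m\rightharpoonup\partial_t\tilde u$ and $\Delta u_m\rightharpoonup\Delta\tilde u$ weakly, while $u_m\nabla v_m\to\tilde u\nabla\tilde v$ in $L^1(Q)$ (product of a strongly-$L^2$ convergent sequence with a strongly-$L^2$ convergent one, using the $X_2$, $X_{2+}$ bounds to upgrade integrability); similarly $u_m^2\to\tilde u^2$ in $L^1(Q)$ and $f_mv_m\rightharpoonup\tilde f\tilde v$ (weak $\times$ strong). Hence $G(\tilde u,\tilde v,\tilde f)=0$, and by weak/strong lower-semicontinuity of the limits the pair $(\tilde u,\tilde v)$ lies in $X_2\times X_{2+}$; nonnegativity $\tilde u,\tilde v\ge 0$ passes via a.e. convergence. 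Moreover, the closed affine-convex structure of $M$ is preserved under weak limits, so $(\tilde u,\tilde v,\tilde f)\in M$, and therefore $(\tilde u,\tilde v,\tilde f)\in\mathcal{S}_{ad}$. Finally, each summand of $J$ is convex and continuous on the relevant space, hence weakly lower semicontinuous: $J(\tilde u,\tilde v,\tilde f)\le\liminf_m J(u_m,v_m,f_m)=\mu$, which combined with $(\tilde u,\tilde v,\tilde f)\in\mathcal{S}_{ad}$ forces equality and proves \eqref{opsol}.

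The main obstacle is the passage to the limit in the chemotaxis term $\nabla\cdot(u_m\nabla v_m)$ and in $u_m^2$: one must guarantee enough compactness to identify the weak limit of these nonlinear quantities with $\nabla\cdot(\tilde u\nabla\tilde v)$ and $\tilde u^2$. This is exactly where the strong regularity afforded by $X_2\times X_{2+}$ is essential — the uniform bound \eqref{bound} gives, via Aubin–Lions, the strong $L^2(Q)$ convergence of $u_m$ and of $\nabla v_m$ needed to close the argument; a merely weak-solution framework would not suffice here.
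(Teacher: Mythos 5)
Your proposal is correct and follows essentially the same route as the paper: a minimizing sequence, uniform bounds on $\{f_m\}$ from the cost term (or from boundedness of $\mathcal{F}$), the uniform state bound \eqref{bound} from Theorem \ref{existence}, weak--strong compactness to pass to the limit in $G$, and weak lower semicontinuity of $J$. You in fact supply the limit-passage details that the paper omits, including the point that $K$ in \eqref{bound} must be uniform along the sequence because it depends only on the (uniformly bounded) $L^{2+}(Q_c)$-norm of $f_m$.
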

The following result 
guarantees the existence of Lagrange multipliers for the optimal control problem (\ref{opt-4}).
\begin{theorem}[Existence of Lagrange multipliers]\label{Lagrange}
Let $(\tilde{u},\tilde{v},\tilde{f})\in\mathcal{S}_{ad}$ be a local optimal solution for the bilinear control problem (\ref{opt-4}). Then,
there exist Lagrange multipliers $(\lambda,\eta)\in L^2(Q)\times L^{2+}(Q)'$ such that  the following variational inequality holds
\begin{eqnarray}\label{lag-1}
&&\gamma_u\int_0^T\int_{\Omega_d} (\tilde{u}-u_d)U+\gamma_v\int_0^T\int_{\Omega_d}(\tilde{v}-v_d)V
+\gamma_f\int_0^T\int_{\Omega_c}{\rm sgn}(\tilde{f})|\tilde{f}|^{1+}F\nonumber\\
&&+\int_0^T\int_\Omega\Big(\partial_tU-\Delta U-\nabla\cdot(U\nabla\tilde{v})-\nabla\cdot(\tilde{u}\nabla V)\Big)\lambda\nonumber\\
&&+\int_0^T\int_\Omega\left(\partial_tV-\Delta V+V-2\,\tilde{u}\,U-\tilde{f}V1_{\Omega_c}\right)\eta
+\int_0^T\int_{\Omega_c}F\,\tilde{v}\,\eta\ge0,
\end{eqnarray}
for all 
$(U,V,F)\in \widehat{X}_2\times\widehat{X}_{2+}\times\mathcal{C}(\tilde{f})$, 
where 
$\mathcal{C}(\tilde{f}):=\{\theta(f-\tilde{f})\,:\, \theta\ge0,\, f\in\mathcal{F}\}$ is the conical hull  of $\tilde{f}$ in
$\mathcal{F}$.\\
\end{theorem}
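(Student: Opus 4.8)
The plan is to apply an abstract Lagrange multiplier theorem (in the spirit of Zowe--Kurcyusz) to the constrained minimization problem (\ref{opt-4}). The setup requires checking three ingredients: first, that $J$ and $G$ are of class $C^1$ in a neighborhood of the local optimum $(\tilde u,\tilde v,\tilde f)$ in the Banach space $X$; second, that the linearized constraint operator $G'(\tilde u,\tilde v,\tilde f)$, restricted to the tangent directions, is surjective onto $Y$ (or, at least, that a weaker regularity/Robinson condition holds so that the cone of multipliers is nonempty); and third, to translate the resulting abstract KKT-type condition into the concrete variational inequality (\ref{lag-1}), reading off $\lambda$ and $\eta$ as the components of the multiplier in $Y'=L^2(Q)\times L^{2+}(Q)'$.

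First I would compute $G'(\tilde u,\tilde v,\tilde f)(U,V,F)$. Since $G_1$ and $G_2$ depend polynomially (degree $\le 2$) on the state and bilinearly on $(f,v)$, the derivative is
\[
G'(\tilde u,\tilde v,\tilde f)(U,V,F)=\Big(\partial_tU-\Delta U-\nabla\cdot(U\nabla\tilde v)-\nabla\cdot(\tilde u\nabla V),\ \partial_tV-\Delta V+V-2\tilde u\,U-\tilde f V1_{\Omega_c}-F\tilde v1_{\Omega_c}\Big),
\]
and $C^1$-regularity follows from the continuity of these multilinear maps on $X$, which in turn rests on the product/embedding estimates available because $(\tilde u,\tilde v)\in X_2\times X_{2+}$ (so, e.g., $\tilde u\in C([0,T];H^1)$, $\nabla\tilde v\in L^{2+}(0,T;W^{1,2+})$, giving $\nabla\cdot(U\nabla\tilde v)\in L^2(Q)$, etc.). The cost functional $J$ is clearly $C^1$, with the only mildly nonsmooth term $|f|^{2+}$ being in fact $C^1$ since the exponent $2+=2+\varepsilon>1$, yielding the derivative $\gamma_f\,\mathrm{sgn}(\tilde f)|\tilde f|^{1+}$.

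The main obstacle is the surjectivity (regularity) of $G'(\tilde u,\tilde v,\tilde f)$. The natural strategy is: given arbitrary $(g_1,g_2)\in Y=L^2(Q)\times L^{2+}(Q)$, set $F=0$ and solve the linear parabolic system
\[
\partial_tU-\Delta U-\nabla\cdot(U\nabla\tilde v)-\nabla\cdot(\tilde u\nabla V)=g_1,\qquad
\partial_tV-\Delta V+V-2\tilde u\,U-\tilde f V1_{\Omega_c}=g_2,
\]
with $U(0)=0$, $V(0)=0$, and show it has a solution $(U,V)\in\widehat X_2\times\widehat X_{2+}$. This is essentially a linearized version of the state system, so one expects to re-run the a priori estimates behind Theorem \ref{existence} (energy estimates for $V$ in $H^1$ and $(\int_\Omega V)^2$, then $L^2$-maximal regularity for $U$, then $L^{2+}$-maximal regularity for $V$), now with the coefficient functions $\tilde u$, $\nabla\tilde v$, $\tilde f$ treated as given data in the appropriate spaces, using Galerkin approximation or a fixed-point/continuation argument to produce the solution and then a standard duality argument for uniqueness. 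Once surjectivity is in hand, the abstract theorem provides $(\lambda,\eta)\in Y'$ with $J'(\tilde u,\tilde v,\tilde f)(U,V,F)+\langle(\lambda,\eta),G'(\tilde u,\tilde v,\tilde f)(U,V,F)\rangle\ge0$ for all $(U,V,F)$ in the tangent set $\widehat X_2\times\widehat X_{2+}\times\mathcal C(\tilde f)$; writing this out term by term and using the duality pairing $L^2(Q)$ against its own dual for the $\lambda$-terms gives exactly (\ref{lag-1}). I would also note that the $F$-contributions ($\gamma_f\,\mathrm{sgn}(\tilde f)|\tilde f|^{1+}F$, $-F\tilde v1_{\Omega_c}$ paired with $\eta$) survive as an inequality rather than an equality precisely because $F$ ranges over the convex cone $\mathcal C(\tilde f)$ and not a linear subspace.
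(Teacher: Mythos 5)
Your proposal follows essentially the same route as the paper: it establishes Fr\'echet differentiability of $J$ and $G$ with the same derivative formulas, proves that $(\tilde u,\tilde v,\tilde f)$ is a regular point by taking $F=0$ and solving the linearized system with zero initial data via a fixed-point argument re-running the a priori estimates of the existence theorem (the paper's Lemma \ref{regular} does exactly this with Leray--Schauder), and then invokes the Zowe--Kurcyusz theorem to obtain $(\lambda,\eta)\in Y'$ and the variational inequality (\ref{lag-1}). The argument is correct and matches the paper's proof in structure and substance.
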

In addition to Theorem \ref{Lagrange}, the following theorem provides some extra regularity for the Lagrange multiplier 
$(\lambda,\eta)$ given by Theorem \ref{Lagrange}.
\begin{theorem}\label{regu-lagrange}
The Lagrange multipliers  $(\lambda,\eta)$ given in Theorem \ref{Lagrange} satisfy the following regularity 
\begin{equation}\label{regu-1}
(\lambda,\eta)\in X_2\times X_{(4/3)+},
\end{equation}
where
$$
X_{(4/3)+}:=\left\{u\in C([0,T];{W}^{\frac{1}{2}+,\frac{4}{3}+}(\Omega))\cap L^{\frac{4}{3}+}(W^{2,\frac{4}{3}+})\,:\,\partial_tu\in L^{\frac{4}{3}+}(Q)\right\}.
$$
\end{theorem}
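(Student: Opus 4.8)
The plan is to bootstrap regularity from the variational inequality \eqref{lag-1}. First I would specialize \eqref{lag-1} to test functions of the form $(U,0,0)$ with $U\in\widehat X_2$ and $U(T)$ arbitrary (and then $(0,V,0)$ with $V\in\widehat X_{2+}$, $V(T)$ arbitrary). Since the sign constraints on $F$ disappear when $F=0$, and since $\widehat X_2$, $\widehat X_{2+}$ are linear subspaces, these give genuine \emph{equalities}: the adjoint system satisfied by $(\lambda,\eta)$ in the distributional sense. Integrating by parts in $t$ and $x$ (using $U(0)=0$, $V(0)=0$ and the no-flux boundary conditions so that the boundary terms land on $\lambda$, $\eta$), one reads off that $(\lambda,\eta)$ solves, backward in time,
\begin{equation*}
\left\{
\begin{array}{rcl}
-\partial_t\lambda-\Delta\lambda+\nabla\tilde v\cdot\nabla\lambda-2\tilde u\,\eta&=&-\gamma_u(\tilde u-u_d)1_{\Omega_d},\\
-\partial_t\eta-\Delta\eta+\eta-\nabla\cdot(\tilde u\nabla\lambda)-\tilde f\,\eta 1_{\Omega_c}&=&-\gamma_v(\tilde v-v_d)1_{\Omega_d},
\end{array}
\right.
\end{equation*}
with homogeneous Neumann conditions and final conditions $\lambda(T)=0$, $\eta(T)=0$. (Strictly speaking one first knows $\eta\in L^{2+}(Q)'=L^{(2+)'}(Q)$ only; that the $v$-equation can be interpreted pointwise is part of what must be proved.)

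The core is then a parabolic regularity bootstrap for this linear backward system, using the already-established bounds $(\tilde u,\tilde v)\in X_2\times X_{2+}$ from Theorem \ref{existence}. I would proceed in stages. Stage one: treat $-\partial_t\lambda-\Delta\lambda=2\tilde u\,\eta-\nabla\tilde v\cdot\nabla\lambda-\gamma_u(\tilde u-u_d)1_{\Omega_d}$ and $-\partial_t\eta-\Delta\eta+\eta=\nabla\cdot(\tilde u\nabla\lambda)+\tilde f\,\eta 1_{\Omega_c}-\gamma_v(\tilde v-v_d)1_{\Omega_d}$ as heat equations with the right-hand sides in the worst space one can currently justify, and use maximal $L^q$--$L^r$ parabolic regularity (and the Sobolev embeddings available in $2D$, in particular $H^1\hookrightarrow L^q$ for all $q<\infty$ and $W^{1+,2+}\hookrightarrow L^\infty$) to gain integrability/differentiability. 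The time-reversal $t\mapsto T-t$ turns the backward problems into standard forward parabolic problems, so the linear theory applies verbatim. Iterating: since $\tilde v\in L^\infty(0,T;W^{1+,2+})\subset L^\infty(0,T;W^{1,q})$ for large $q$ and $\tilde u\in C([0,T];H^1)\cap L^2(H^2)$, the products $\tilde u\,\eta$, $\nabla\tilde v\cdot\nabla\lambda$, $\nabla\cdot(\tilde u\nabla\lambda)=\nabla\tilde u\cdot\nabla\lambda+\tilde u\Delta\lambda$ can be placed in successively better spaces once $\lambda$ is known to be in $X_2$ and $\eta$ in an $L^{s}(W^{1,s})$-type space. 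The target exponents $2$ for $\lambda$ and $(4/3)+$ for $\eta$ are exactly what the term $\nabla\cdot(\tilde u\nabla\lambda)$ allows: with $\lambda\in L^2(H^2)$ one has $\Delta\lambda\in L^2(Q)$ and $\nabla\lambda\in L^2(H^1)\hookrightarrow L^2(L^q)$; then $\tilde u\Delta\lambda\in L^{(4/3)+}(Q)$ by Hölder (roughly $\tilde u\in L^{4+}$ in space-time from $X_2$, and $2$ paired with $4+$ gives $4/3+$), and similarly $\nabla\tilde u\cdot\nabla\lambda$, so the $\eta$-equation is forced into $X_{(4/3)+}$ by maximal regularity for $\partial_t-\Delta+1$ in $L^{(4/3)+}$. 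Feeding $\eta\in X_{(4/3)+}\hookrightarrow C([0,T];W^{1/2+,4/3+})$, and in $2D$ into some $L^r(Q)$ with $r>2$, back into $2\tilde u\,\eta$ keeps the $\lambda$-right-hand side in $L^2(Q)$, closing the loop at $\lambda\in X_2$.

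A subtlety worth flagging: one should run the bootstrap as a genuine fixed-point/continuity argument on the \emph{coupled} linear system rather than alternately, because $\lambda$ and $\eta$ feed each other; the clean way is to set up the solution map on a product of the target spaces, verify it is well-defined and a contraction (or compact, then use that the weak solution obtained from \eqref{lag-1} is unique by the same energy estimate used for Theorem \ref{existence}'s uniqueness part), and conclude that the Lagrange multiplier from Theorem \ref{Lagrange}, being \emph{a} solution of the adjoint system, coincides with the regular one. I would also need to double-check that the initial-regularity of the multipliers coming out of Theorem \ref{Lagrange} — namely $\lambda\in L^2(Q)$ and $\eta\in L^{(2+)'}(Q)$ — is enough to \emph{start} the iteration, which it is, since one can first test only against $(0,V,0)$ and $(U,0,0)$ with compactly-supported-in-time $U,V$ to get interior parabolic regularity, then propagate up to $t=T$ using $\lambda(T)=\eta(T)=0$. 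The main obstacle is the term $\nabla\cdot(\tilde u\nabla\lambda)$ in the $\eta$-equation: it is the only place where a second derivative of one multiplier couples into the other, and it is precisely what prevents $\eta$ from reaching the same $H^2$-in-space, $L^2$-in-time regularity as $\lambda$; getting the exponent $(4/3)+$ — and no better — out of it, using only $\tilde u\in X_2$ and the $2D$ Gagliardo–Nirenberg inequalities \eqref{G-N-2}--\eqref{G-N-4}, is the delicate computation, and it is the reason the function space $X_{(4/3)+}$ in the statement has exactly the indices it does.
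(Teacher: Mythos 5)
Your overall strategy --- read off the adjoint system from \eqref{lag-1}, reverse time, and apply linear parabolic regularity using $(\tilde u,\tilde v)\in X_2\times X_{2+}$ --- is the same as the paper's, and your energy-estimate starting point ($\lambda$ in $L^\infty(H^1)\cap L^2(H^2)$, $\eta$ in $L^\infty(L^2)\cap L^2(H^1)$, then one application of $L^{(4/3)+}$ maximal regularity for the $\eta$-equation) matches what is actually done. However, there is a genuine gap at precisely the step you flag as a "subtlety": the identification of the low-regularity multiplier $(\lambda,\eta)\in L^2(Q)\times (L^{2+}(Q))'$ with the regular solution $(\hat\lambda,\hat\eta)\in X_2\times X_{(4/3)+}$ of the adjoint problem. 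You propose to use "the same energy estimate used for Theorem \ref{existence}'s uniqueness part," or interior parabolic regularity propagated from compactly-supported-in-time test functions. Neither works as stated: the energy-estimate uniqueness requires testing the equations by $\lambda-\hat\lambda$ and $\eta-\hat\eta$ themselves, which is not admissible when $\eta-\hat\eta$ is merely an element of $(L^{2+}(Q))'$ and the equations hold only in the transposed (very weak) sense \eqref{adj-1}--\eqref{adj-2}, with all derivatives falling on the test functions. The paper closes this gap by a duality argument that you do not have: it invokes the well-posedness of the \emph{linearized primal} problem \eqref{reg-1} (Lemma \ref{regular}) with the specific data $g_u=\lambda-\hat\lambda\in L^2(Q)$ and $g_v=\mathrm{sgn}(\eta-\hat\eta)|\eta-\hat\eta|^{1+}\in L^{2+}(Q)$, subtracts the two very-weak identities tested against that $(U,V)$, and concludes $\|\lambda-\hat\lambda\|_{L^2(Q)}^2+\|\eta-\hat\eta\|_{L^{2}(Q)}^2=0$. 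Without this (or an equivalent transposition argument), your bootstrap only produces \emph{some} regular solution of the adjoint system, not the regularity of the multiplier delivered by the abstract Lagrange theorem.

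A secondary inaccuracy: you identify $\nabla\cdot(\tilde u\nabla\lambda)$ as the term that forces the exponent $\tfrac43+$, via "$\tilde u\in L^{4+}(Q)$ paired with $\Delta\lambda\in L^2(Q)$." In fact $\tilde u\in L^\infty(H^1)\hookrightarrow L^\infty(L^{\infty-})$ in $2D$, so $\tilde u\Delta\lambda+\nabla\tilde u\cdot\nabla\lambda\in L^{2-}(Q)$, which is strictly better than $L^{(4/3)+}(Q)$. The term that actually caps the regularity of $\eta$ at $X_{(4/3)+}$ is $\tilde f\,\eta\,1_{\Omega_c}$: the control $\tilde f$ lives only in $L^{2+}(Q_c)$, and the first energy estimate gives $\eta\in L^\infty(L^2)\cap L^2(H^1)\hookrightarrow L^4(Q)$, so H\"older yields exactly $L^{(4/3)+}(Q)$. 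This does not break your argument (both terms land in $L^{(4/3)+}(Q)$), but the stated explanation of why the space $X_{(4/3)+}$ has the indices it does is not the right one.
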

\section{Existence and Uniqueness of Strong Solution}\label{sec:3}
In this section we will prove the existence and uniqueness of strong solutions of system (\ref{eq1})-(\ref{eq2}) (Theorem \ref{existence}). For this,  the Leray-Schauder
fixed-point Theorem will be used. We consider the Banach spaces 
\begin{equation}\label{sys-2}
W_u:=L^{\infty-}(Q)\cap  L^{4-}(W^{1,4-})\ \mbox{ and }\ W_v=L^{\infty-}(Q)
\end{equation}
($L^{\infty-}$ means $L^{q}$ for  $q<\infty$  large enough), 
and the  operator
$$S:W_u\times W_v\rightarrow X_2\times X_{2+} \hookrightarrow W_u\times W_v,$$
where 
$S(\bar{u},\bar{v})=(u,v)$ is the solution of the uncoupled problem (first $v$ can be computed  and afterwards $u$)
\begin{equation}\label{sys-1}
\left\{
\begin{array}{rcl}
\partial_tu-\Delta u&=&\nabla\cdot(\bar{u}_+\nabla v),\\
\partial_tv-\Delta v+v&=&\bar{u}^2+f\bar{v}_+1_{\Omega_c},
\end{array}
\right.
\end{equation}
endowed with the initial-boundary conditions \eqref{eq2}, 
where $\bar{u}_+:=\max\{\bar{u},0\}\ge0$, $\bar{v}_+:=\max\{\bar{v},0\}\ge0.$ We observe that, into $2D$ domains, the injection of $X_2\times X_{2+}$ in $W_u\times W_v$ is compact. In fact, due to $2D$ inequality \eqref{2D-4} then $X_2\hookrightarrow L^{4}(W^{1,4})$ with continuous embedding.

\

In the following lemmas we will prove the hypotheses of Leray-Schauder fixed-point theorem.
\begin{lemma}\label{compact}
The operator $S$, defined in (\ref{sys-1}) is well defined and completely continuous (compact and continuous).
\end{lemma}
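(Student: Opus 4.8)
The plan is to split the proof into three parts: (i) $S$ is well defined, i.e. for each $(\bar u,\bar v)\in W_u\times W_v$ the uncoupled problem \eqref{sys-1}--\eqref{eq2} has a unique solution $(u,v)\in X_2\times X_{2+}$; (ii) $S$ maps bounded sets of $W_u\times W_v$ into bounded sets of $X_2\times X_{2+}$, which by the compact embedding $X_2\times X_{2+}\hookrightarrow\!\!\hookrightarrow W_u\times W_v$ noted above gives compactness; and (iii) $S$ is continuous from $W_u\times W_v$ to itself. First I would solve the $v$-equation: with $\bar u\in L^{\infty-}(Q)$ we have $\bar u^2\in L^{q}(Q)$ for all finite $q$, and $f\bar v_+1_{\Omega_c}\in L^{2+}(Q)$ since $f\in L^{2+}(Q_c)$ and $\bar v\in L^{\infty-}(Q)$; hence by maximal $L^{2+}$-regularity for the heat equation with Neumann boundary conditions (using $v_0\in W^{1+,2+}(\Omega)$), the linear problem $\partial_t v-\Delta v+v=\bar u^2+f\bar v_+1_{\Omega_c}$ has a unique solution $v\in X_{2+}$, with the corresponding estimate. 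Nonnegativity $v\ge0$ follows by testing the equation with $v_-=\min\{v,0\}$ and using that the right-hand side $\bar u^2+f\bar v_+1_{\Omega_c}$ is $\ge0$ wherever... actually the sign of $f$ is unknown, so I would instead argue: $v_-$ satisfies $\frac12\frac{d}{dt}\|v_-\|^2+\|\nabla v_-\|^2+\|v_-\|^2=\int\bar u^2 v_- +\int_{\Omega_c} f\bar v_+ v_-\le \int_{\Omega_c} f\bar v_+ v_-$, and since $\bar v_+\,v_-=0$ pointwise, the last term vanishes, giving $v_-\equiv0$; then $v\in L^{\infty-}(Q)$ by Sobolev embedding of $X_{2+}$ in $2D$.

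Next I would solve the $u$-equation, which with $v$ now fixed is the linear parabolic problem $\partial_t u-\Delta u=\nabla\cdot(\bar u_+\nabla v)$ with Neumann data and $u_0\in H^1(\Omega)$. Since $\bar u_+\in L^{\infty-}(Q)$ and $\nabla v\in L^{2+}(0,T;W^{1,2+})\hookrightarrow L^{2+}(0,T;L^\infty(\Omega))$ in $2D$, the source $\nabla\cdot(\bar u_+\nabla v)$ is in $L^{2}(0,T;L^2)$ once one checks $\bar u_+\Delta v\in L^2(Q)$ (true: product of $L^{\infty-}$ and $L^{2+}$) and $\nabla\bar u_+\cdot\nabla v$—here one must be careful, $\bar u_+$ need not have a spatial gradient in $W_u$... so instead I keep the divergence form and use that $\bar u_+\nabla v\in L^2(0,T;H^1)$ is false in general; rather, $\bar u_+\nabla v\in L^{2}(Q)^2$ (product of $L^{\infty-}(Q)$ and $L^{2+}(Q)$), so by maximal $L^2$-regularity for $\partial_t u-\Delta u=\mathrm{div}\,g$ with $g\in L^2(Q)^2$ one obtains $u\in L^2(0,T;H^1)\cap C([0,T];L^2)$ with $\partial_t u\in L^2(0,T;(H^1)')$; then bootstrapping once more using $g=\bar u_+\nabla v\in L^{4-}(Q)^2$ (product of $L^{\infty-}$ and $L^{4-}$, the latter since $\nabla v\in L^{2+}(W^{1,2+})\hookrightarrow L^{2+}(L^\infty)\cap$ higher integrability... one can get $\nabla v\in L^{4-}(Q)$) gives $u\in X_2$. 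Nonnegativity $u\ge0$ comes from testing with $u_-$: $\frac12\frac{d}{dt}\|u_-\|^2+\|\nabla u_-\|^2=-\int \bar u_+\nabla v\cdot\nabla u_-$, and one closes via Gronwall after a Cauchy-Schwarz/Young using $\nabla v\in L^2(L^\infty)$, forcing $u_-\equiv0$ since $u_0\ge0$. Then $u\in L^{\infty-}(Q)\cap L^{4-}(W^{1,4-})\subset W_u$ by the embedding of $X_2$, so $S$ is well defined.

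For compactness, I collect all the above estimates: they show $\|(u,v)\|_{X_2\times X_{2+}}$ is bounded in terms of $\|(\bar u,\bar v)\|_{W_u\times W_v}$ (polynomially), so $S$ maps bounded sets to bounded sets; composed with the compact injection $X_2\times X_{2+}\hookrightarrow\!\!\hookrightarrow W_u\times W_v$ (Aubin--Lions, using the $2D$ inequality \eqref{2D-4} for the $X_2$ factor and analogously for $X_{2+}$), this yields that $S$ is a compact operator. For continuity, take $(\bar u_n,\bar v_n)\to(\bar u,\bar v)$ in $W_u\times W_v$; by the boundedness just shown, $(u_n,v_n)=S(\bar u_n,\bar v_n)$ is bounded in $X_2\times X_{2+}$, hence (up to a subsequence) converges weakly in $X_2\times X_{2+}$ and strongly in $W_u\times W_v$ to some $(u,v)$; passing to the limit in the linear equations \eqref{sys-1} (the nonlinear-in-data terms $\bar u_n^2$, $\bar u_{n+}\nabla v_n$, $f\bar v_{n+}1_{\Omega_c}$ converge strongly thanks to the $W_u\times W_v$ convergence and the continuity of $s\mapsto s_+$) identifies $(u,v)=S(\bar u,\bar v)$; a standard subsequence argument then gives convergence of the whole sequence. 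The main obstacle is the regularity bookkeeping in step (ii)—ensuring that the chemotaxis-type source $\nabla\cdot(\bar u_+\nabla v)$, built from a datum $\bar u_+$ with no spatial derivatives and a $v$ with only $W^{2,2+}$ regularity, lands in exactly the right Lebesgue--Sobolev scale to run maximal parabolic regularity and reach the space $X_2$; this is where the $2D$ embeddings $W^{1,2+}\hookrightarrow L^\infty$ and $X_{2+}\hookrightarrow L^{\infty-}$, together with the choice ``$L^{\infty-}$ means $L^q$ for $q$ large enough,'' do the essential work.
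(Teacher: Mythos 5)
There is a genuine gap in your treatment of the $u$-equation, and it stems from misreading the space $W_u$. You write that ``$\bar u_+$ need not have a spatial gradient in $W_u$'' and therefore abandon the expansion of $\nabla\cdot(\bar u_+\nabla v)$, falling back on divergence-form maximal regularity with datum $g=\bar u_+\nabla v\in L^2(Q)^2$ (and then $L^{4-}(Q)^2$). But $W_u:=L^{\infty-}(Q)\cap L^{4-}(W^{1,4-})$ by \eqref{sys-2}, so $\nabla\bar u$ (hence $\nabla\bar u_+=\nabla\bar u\,1_{\{\bar u>0\}}$) is available in $L^{4-}(Q)$; this is precisely the ingredient the paper exploits. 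More importantly, your divergence-form route cannot reach the target space: $X_2$ requires $u\in L^2(0,T;H^2)$ with $\partial_t u\in L^2(Q)$, and a right-hand side $\mathrm{div}\,g$ with $g\in L^p(Q)^2$ only yields $W^{1,p}$-in-space regularity, no matter how much you bootstrap the integrability of $g$. The paper's argument writes $\nabla\cdot(\bar u_+\nabla v)=\bar u_+\Delta v+\nabla\bar u_+\cdot\nabla v$, places the first term in $L^2(Q)$ as a product of $L^{\infty-}(Q)$ and $L^{2+}(Q)$, places the second in $L^2(Q)$ as a product of $L^{4-}(Q)$ and $L^{4+}(Q)$ (the latter coming from $v\in X_{2+}$ and \eqref{2D-4}), and then invokes non-divergence $L^2$ parabolic regularity to land in $X_2$. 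Without this step your proof of well-definedness --- and hence of compactness, which rests on the resulting bound in $X_2\times X_{2+}$ --- does not close.

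A secondary remark: your nonnegativity argument for $v$ is both unnecessary and incorrect as written. The lemma does not claim that $S$ preserves positivity (that is established only for fixed points, in Step 1 of the proof of Lemma \ref{estimates}), and your cancellation $\int_{\Omega_c} f\bar v_+ v_-=0$ confuses the input $\bar v$ with the output $v$: the product $\bar v_+\,v_-$ need not vanish pointwise, since these are positive and negative parts of two different functions. The remainder of your plan --- solving for $v$ first via $L^{2+}$ maximal regularity, deducing compactness from the bounded-sets-to-bounded-sets property combined with the compact embedding $X_2\times X_{2+}\hookrightarrow W_u\times W_v$, and identifying limits to get continuity --- coincides with the paper's approach.
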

\begin{proof}
Let $(\bar{u},\bar{v})\in W_u\times W_v$. Since $f\in L^{2+}(Q_c)$ and $\bar{v}\in L^{\infty-}(Q)$,
it is easy to deduce that $\bar{u}^2+f\bar{v}_+1_{\Omega_c}\in L^{2+}(Q)$. Then, applying a parabolic regularity result 
in $L^{2+}$ (see \cite[Theorem 10.22, p. 344]{feireisl}),  there exists a unique solution $v\in X_{2+}$ of
(\ref{sys-1})$_2$ and
\begin{equation} \label{sys-3}
\|v\|_{X_{2+}}\le C(\|\overline{u}\|_{L^{4+}(Q)},\|\overline{v}\|_{L^{\infty-}(Q)}\|f\|_{L^{2+}(Q_c)}, \|v_0\|_{W^{1+,2+}}).
\end{equation}
Now, using that $v\in X_{2+}$ and the $2D$ inequality \eqref{2D-4}, one has 
$\nabla v\in L^{4+}(Q)$
Therefore, since $(\bar{u}_+,\Delta v)\in L^{\infty-}(Q)\times L^{2+}(Q)$ and $(\nabla\bar{u}_+,\nabla v)\in L^{4-}(Q)\times L^{4+}(Q)$ one can deduce
$$
\nabla\cdot(\bar{u}_+\nabla v)=\bar{u}_+\Delta v+\nabla\bar{u}_+\cdot\nabla v\in L^2(Q).
$$
Then, applying again the parabolic regularity result in $L^2$ \cite[Theorem 10.22, p. 344]{feireisl}, we conclude that there exists a unique
$u\in X_2$ solution of (\ref{sys-1})$_1$ such that
\begin{equation} \label{sys-4}
\|u\|_{X_2} \le C(\|\bar{u}\|_{L^{\infty-}(Q)}\|\Delta v\|_{L^{2+}(Q)},
\|\nabla\bar{u}\|_{L^{4-}(Q)}\|\nabla v\|_{L^{4+}(Q)},
\|u_0\|_{H^1}).
\end{equation}

Therefore, the operator $S$ is well defined. The compactness of $S$ follows of (\ref{sys-3}), (\ref{sys-4}), and the fact that
$X_2\times X_{2+}$ is compactly embedded in $W_u\times W_v$. In particular $S$ maps bounded sets of $W_u\times W_v$ into bounded sets of $X_2\times X_{2+}$. On the other hand,  it is not difficult to prove that $S$ is  continuous.\qed
\end{proof}
\begin{lemma}\label{estimates}
The set of possible fixed-points 
$$S_\alpha:=\{(u,v)\in W_u\times W_v\,:\, (u,v)=\alpha S(u,v),\ \mbox{ for some }\alpha\in[0,1]\}$$
 is bounded in
$W_u\times W_v$. Moreover, for $\alpha\in[0,1]$, there exists a constant 
\begin{equation}\label{es-1}
R:=R(T,\|u_0\|_{H^1},\|v_0\|_{W^{1+,2+}},\|f\|_{L^{2+}(Q_c)})>0
\end{equation}
  ($R$ independent of $\alpha$), such that $\|(u,v)\|_{X_2\times X_{2+}}\le R$  for any $(u,v)\in S_\alpha$. 
\end{lemma}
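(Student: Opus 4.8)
The plan is to establish the a priori estimate $\|(u,v)\|_{X_2\times X_{2+}}\le R$ for any solution of $(u,v)=\alpha S(u,v)$, $\alpha\in[0,1]$, using an energy argument combined with control of $\bigl(\int_\Omega v\bigr)^2$, exactly as anticipated in the introduction. Writing out what $(u,v)=\alpha S(u,v)$ means, the pair solves
\begin{equation*}
\partial_t u-\Delta u=\alpha\,\nabla\cdot(u_+\nabla v),\qquad
\partial_t v-\Delta v+v=\alpha\,u^2+\alpha\,f v_+\,1_{\Omega_c},
\end{equation*}
with the boundary-initial conditions \eqref{eq2}. A first step is to record the nonnegativity $u\ge 0$, $v\ge 0$ (testing the $u$-equation by $u_-$ and the $v$-equation by $v_-$, using $u_0,v_0\ge 0$), so that $u_+=u$, $v_+=v$; then mass conservation $\int_\Omega u(t)=\alpha\int_\Omega u_0$ follows by integrating the $u$-equation, giving a uniform $L^\infty(0,T;L^1)$ bound on $u$ with constant independent of $\alpha$.

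The core of the argument is the coupled differential inequality. First I would test the $v$-equation by $-\Delta v$, producing $\tfrac{d}{dt}\|\nabla v\|^2 + \|\Delta v\|^2 + \|\nabla v\|^2 \le C\|u^2\|^2 + C\|fv\|_{L^2(\Omega_c)}^2 \le C\|u\|_{L^4}^4 + C\|f\|_{L^{2+}}^2\|v\|_{L^{q}}^2$ for a suitable $q$; the term $\|u\|_{L^4}^4$ is the dangerous one. Simultaneously, test the $u$-equation by $u$ to get $\tfrac{d}{dt}\|u\|^2 + \|\nabla u\|^2 \le C\int_\Omega u^2|\Delta v| \le C\|u\|_{L^4}^2\|\Delta v\|$ (after integrating by parts and using $\nabla\cdot(u\nabla v)=u\Delta v+\nabla u\cdot\nabla v$ appropriately). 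Using the $2D$ inequality \eqref{G-N-4}, $\|u\|_{L^4}^2\le C(\|\nabla u\|\|u\|+\|u\|^2)$, together with $\|u\|\le C(\|\nabla u\|^{1/2}m_0^{1/2}+m_0)$ from \eqref{G-N-2}, one bounds $\|u\|_{L^4}^4$ by $C\|\nabla u\|^2\|u\|^2 + (\text{lower order}) \le C\|\nabla u\|^3 m_0 + \dots$. The key extra ingredient, and the reason $p=2$ is sharp, is to also differentiate $\bigl(\int_\Omega v\bigr)^2$: from \eqref{conser-2}, $\tfrac{d}{dt}\bigl(\int_\Omega v\bigr)^2 + 2\bigl(\int_\Omega v\bigr)^2 = 2\bigl(\int_\Omega v\bigr)\bigl(\alpha\int_\Omega u^2 + \alpha\int_{\Omega_c}fv\bigr)$, and $\int_\Omega u^2=\|u\|^2$ is controlled via \eqref{G-N-2} by $\|\nabla u\|m_0+m_0^2$. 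Forming a combination $\tfrac{d}{dt}\bigl(\|u\|^2+\delta\|\nabla v\|^2 + \beta(\int_\Omega v)^2\bigr)$ with well-chosen small constants $\delta,\beta$, absorbing the cubic term $\|\nabla u\|^3$ via Young's inequality against the half of $\|\nabla u\|^2$ coming from the $u$-equation (here the $m_0$ factor and the interpolation make the exponents work out in $2D$), and absorbing $\|\Delta v\|$-terms into $\delta\|\Delta v\|^2$, one arrives at $\tfrac{d}{dt}\mathcal E(t) + c\,\mathcal D(t)\le C(1+\mathcal E(t))$ with $\mathcal E=\|u\|^2+\delta\|\nabla v\|^2+\beta(\int_\Omega v)^2$ and $\mathcal D=\|\nabla u\|^2+\|\Delta v\|^2+\|\nabla v\|^2$. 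Gronwall's lemma then gives $\mathcal E\in L^\infty(0,T)$ and $\mathcal D\in L^1(0,T)$ with bounds depending only on $T$, the data, and $\|f\|_{L^{2+}(Q_c)}$, not on $\alpha$.

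With these bounds in hand, one bootstraps to the full $X_2\times X_{2+}$ norm. From $u\in L^\infty(L^2)\cap L^2(H^1)$ one gets, via \eqref{2D-4}, $u\in L^4(Q)$, hence $u^2\in L^2(Q)$; then $fv\,1_{\Omega_c}\in L^{2+}(Q)$ using $v\in L^\infty(Q)$ (which itself follows from $v\in L^\infty(H^1)$ in $2D$, or from $L^2(H^2)\cap L^\infty(H^1)$), so parabolic $L^{2+}$-regularity (\cite[Theorem 10.22, p.~344]{feireisl}) gives $\|v\|_{X_{2+}}\le R$, whence $\nabla v\in L^{4+}(Q)$ by \eqref{2D-4} again and $\Delta v\in L^{2+}(Q)$. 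Then $\nabla\cdot(u\nabla v)=u\Delta v+\nabla u\cdot\nabla v\in L^2(Q)$ (using $u\in L^{\infty-}(Q)$, $\nabla u\in L^{4-}(Q)$ obtained from the $H^1$-energy bound plus interpolation, paired with $\Delta v\in L^{2+}$, $\nabla v\in L^{4+}$), so parabolic $L^2$-regularity gives $\|u\|_{X_2}\le R$. Finally, $X_2\times X_{2+}\hookrightarrow W_u\times W_v$ continuously yields the desired bound on $S_\alpha$ in $W_u\times W_v$. The main obstacle is the energy step: the $\|u\|_{L^4}^4$ production term is genuinely at the borderline of what the $2D$ dissipation $\|\nabla u\|^2$ can absorb, and it is precisely the simultaneous control of $(\int_\Omega v)^2$ — which converts $\int_\Omega u^2$ into something linear in $\|\nabla u\|$ rather than quadratic — that makes the Gronwall-type closure possible; pushing $p$ past $2$ would break the exponent count here.
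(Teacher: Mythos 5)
There is a genuine gap in the core energy step. The paper's Step~2 does \emph{not} estimate the chemotaxis and production terms separately: it tests the $u$-equation by $\alpha u$ and the $v$-equation by $-\tfrac12\Delta v$ and adds, whereupon the chemotaxis contribution $-\alpha\int_\Omega u\nabla v\cdot\nabla u$ and the production contribution $-\tfrac{\alpha}{2}\int_\Omega u^2\Delta v=+\alpha\int_\Omega u\nabla u\cdot\nabla v$ cancel \emph{exactly}. This cancellation is the whole point of the quadratic-production energy structure; after it, the only remaining bad term is $\tfrac{\alpha}{2}\int_{\Omega_c}|fv\Delta v|$, and the $(\int_\Omega v)^2$ equation is only needed to absorb the single $\alpha\int_\Omega u^2\le\alpha C(\|\nabla u\|+1)$ coming from the mean of $v$. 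Your route keeps both dangerous terms: $C\|u\|_{L^4}^4$ from testing the $v$-equation by $-\Delta v$, and $C\|u\|_{L^4}^2\|\Delta v\|$ (which after Young again yields $C\|u\|_{L^4}^4$) from testing the $u$-equation by $u$. With only $\|u\|_{L^1}=m_0$ under control, \eqref{G-N-4} and \eqref{G-N-2} give $\|u\|_{L^4}^4\lesssim m_0\|\nabla u\|^3+m_0^2\|\nabla u\|^2+\cdots$, and the cubic term $\|\nabla u\|^3$ \emph{cannot} be absorbed into $\epsilon\|\nabla u\|^2$ by Young's inequality — Young lets you hide lower powers under higher ones, not the reverse. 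What you would actually get is a Riccati-type inequality $\tfrac{d}{dt}\mathcal{E}+c\,\mathcal{D}\le C\mathcal{D}^{3/2}+\cdots$, which gives at best a local-in-time or small-mass bound, not the global, $\alpha$-uniform constant $R$ the lemma asserts. So the step "absorbing the cubic term via Young's inequality against half of $\|\nabla u\|^2$" is where the proof breaks; the fix is to choose the test pair $(\alpha u,-\tfrac12\Delta v)$ so that the $L^4$ terms never appear.

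A secondary, repairable issue is the bootstrap order. From the energy step you only get $u\in L^\infty(L^2)\cap L^2(H^1)$, hence $u\in L^4(Q)$ and $u^2\in L^2(Q)$ — this is enough for $v\in X_2$ but \emph{not} for the $L^{2+}$ parabolic regularity giving $v\in X_{2+}$, which needs $u^2\in L^{2+}(Q)$. Likewise $\nabla u\in L^{4-}(Q)$ does not follow "from the $H^1$-energy bound plus interpolation"; it requires $u\in L^\infty(H^1)\cap L^2(H^2)$ first. The paper bridges this with its Step~3 (testing by $u^{q-1}$ for arbitrary $q<\infty$ to get $u$ bounded in $L^\infty(L^{\infty-})$, using that $\nabla v\in L^4(Q)$ from $v\in X_2$), and only then runs the $-\Delta u$ estimate and the $L^{2+}$ regularity for $v$. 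You need that iteration, or some substitute for it, to close Steps 4--5.
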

\begin{proof}
Let $\alpha\in (0,1]$ (the case $\alpha=0$ is trivial). Let consider $(u,v)\in S_\alpha.$ Then, from Lemma \ref{compact},
$(u,v)\in X_2\times X_{2+}$  and satisfies pointwisely a.e. in $Q$ the following problem
\begin{equation}\label{es-2}
\left\{
\begin{array}{rcl}
\partial_tu-\Delta u&=&\nabla\cdot(u_+\nabla v),\\
\partial_tv-\Delta v+v&=&\alpha u^2+\alpha fv_+1_{\Omega_c},
\end{array}
\right.
\end{equation}
endowed with the corresponding initial and boundary conditions. Therefore it is enough to prove that $(u,v)$ is bounded in
$X_2\times X_{2+}$ independent of the parameter $\alpha$. 

\

\underline{Step 1:} $u,v\ge 0$ and $\int_\Omega u(t)=m_0$.
\vspace{0.1cm}

Testing (\ref{es-2})$_1$ by $u_-=\min\{u,0\}\ge0$ and taking
into account that $u_-=0$ if $u\ge0$, $\nabla u_-=\nabla u$ if $u\le 0$ and $\nabla u_-=0$ if $u>0$, we have
$$
\frac12\frac{d}{dt}\|u_-\|^2+\|\nabla u_-\|^2=-(u_+\nabla v,\nabla u_-)=0.
$$
Then, $u_-\equiv0$; so that $u\ge0$. Analogously, testing (\ref{es-2})$_2$ by $v_-$ we deduce
$$
\frac12\frac{d}{dt}\|v_-\|^2+\|v_-\|^2_{H^1}=\alpha(u^2,v_-)+\alpha(fv_+,v_-)_{\Omega_c}\le 0,
$$
which implies $v\ge0$. Therefore, $(u_+,v_+)=(u,v)$. Furthermore, integrating
(\ref{es-2})$_1$ in $\Omega$ we deduce that $\int_\Omega u(t)=m_0$.

\

\underline{Step 2:} $v$ is bounded in $X_2$ and 
$\sqrt{\alpha}u$ in $L^\infty(L^2)\cap L^2(H^1)$. 
\vspace{0.1cm}

Testing (\ref{es-2})$_1$ by $\alpha u$ and (\ref{es-2})$_2$ by $-\frac12\Delta v$, 
integrating by parts and adding the respective equations, chemotaxis and production terms cancel; thus, using the H\"older and Young inequalities and the $2D$ Sobolev embedding $H^1(\Omega)\subset L^{\infty-}(\Omega)$, we get
$$
\frac12\frac{d}{dt}\left(\alpha\|u\|^2+\frac12\|\nabla v\|^2\right)+\alpha\|\nabla u\|^2+\frac12 \|\Delta v\|^2 +\frac12\|\nabla v\|^2
\le \frac{\alpha}{2}\int_\Omega|fv\Delta v|
$$
Then, since $\alpha\le 1$,  applying the H\"older and Young inequalities, we arrive at
\begin{equation}\label{es-3}
\frac{d}{dt}\left( \frac\alpha 2 \|u\|^2+\frac14 \|\nabla v\|^2\right)+\alpha\|\nabla u\|^2+\frac14\|\Delta v\|^2+\frac12\|\nabla v\|^2
\le C  \, \|f\|_{L^{2+}}^2 \| v \|_{H^{1}}^2 .
\end{equation}
On the other hand, integrating (\ref{es-2})$_2$ in $\Omega$, we deduce
\begin{equation}\label{es-5a}
\frac{d}{dt}\left(\int_\Omega v\right)+\int_\Omega v=\alpha\int_\Omega u^2+\alpha\int_{\Omega_c} fv 
\le \alpha \|u\|^2 + \alpha \|f\|\ \| v \|.
\end{equation}
In order to control the first term on the right-side of (\ref{es-5a}), we use the Gagliardo-Nirenberg inequality \eqref{G-N-2} and the fact that $\|u\|_{L^1}=m_0$,
$$
\|u\|^2\le C(\|\nabla u\|\|u\|_{L^1}+\|u\|_{L^1}^2)=C  (\|\nabla u\|+1).
$$
Then, multiplying (\ref{es-5a}) by $\frac12\int_\Omega v$, and applying the Young inequality we get
\begin{eqnarray}\label{es-5}
\frac14\frac{d}{dt} \left(\int_\Omega v\right)^2+\frac14\left(\int_\Omega v\right)^2
 &\le&  \frac{\alpha}{2} \| \nabla u\|^2+C\alpha\left(\int_\Omega v\right)^2\nonumber\\
 &&+C+C \|f\|^2 \|v\|^2.
\end{eqnarray}
Adding (\ref{es-3}) and (\ref{es-5}),  the term depending on $\| \nabla u\|^2$ is absorbed,  obtaining
\begin{equation*}\label{es-7}
\frac{d}{dt}\left(\frac\alpha 2 \|u\|^2+\frac14\|v\|^2_{H^1}\right)+\frac\alpha 2 \|\nabla u\|^2+\frac14 \|v\|^2_{H^2}
\le C(\|f\|^2+\|f\|^2_{L^{2+}}+1)\|v\|_{H^1}^2 + C.
\end{equation*}
Therefore,  the Gronwall Lemma implies 
\begin{equation}\label{cot-1}
\|v\|_{L^\infty(H^1)\cap L^2(H^2)}
+ \|\sqrt{\alpha}u\|_{L^\infty(L^2)\cap L^2(H^1)}
\le K_1(\|u_0\|,\|v_0\|_{H^1},\|f\|_{L^{2+}(Q_c)}).
\end{equation}
Finally, from equation (\ref{es-2})$_2$ and the previous estimates (in particular $v$ is bounded in $L^\infty(L^{\infty -})$ hence $fv1_{\Omega_c}$ is bounded in $L^2(Q)$, and $\sqrt{\alpha}u$ is bounded in $L^4(Q)$ hence $\alpha\,u^2$ is bounded in $L^2(Q)$), one has that $\partial_t v$ is bounded in $L^2(Q)$, hence 
$v$ is bounded in $X_2$.

\

\underline{Step 3:} $u$ is bounded in $L^\infty(L^{\infty-})$. More  precisely, $u^{q/2}$ is bounded in $L^\infty(L^2)\cap L^2(H^1)$ for any $q<\infty$. 
\vspace{0.1cm}

Testing (\ref{es-2})$_1$ by $u^{q-1}$ (for any $q<\infty$) and applying \eqref{2D-4} and Young inequalities we have
\begin{eqnarray}
&&\frac1q\frac{d}{dt}\|u^{q/2}\|^2+\frac {4(q-1)}{q^2}\|\nabla(u^{q/2})\|^2
=
- \frac{2(q-1)}q\int_\Omega u^{q/2}\nabla v \cdot \nabla u^{q/2}\nonumber\\
&&\le \frac{2(q-1)}q \| u^{q/2} \|_{L^4} \| \nabla v \|_{L^4} \| \nabla u^{q/2} \|\nonumber\\
&&\le \frac {2(q-1)}{q^2} \| u^{q/2} \|_{H^1}^2
+  C\frac{(q-1)}{q^2} \| \nabla v \|_{L^4}^4 \|  u^{q/2} \|^2.\nonumber
\end{eqnarray}
Adding $\frac {4(q-1)}{q^2}\|(u^{q/2})\|^2$ to both sides of the previous inequality, we have
\begin{equation}\label{es-10-3}
\frac1q\frac{d}{dt}\|u^{q/2}\|^2+\frac {4(q-1)}{q^2}\| u^{q/2} \|_{H^1}^2
\le \frac{C(q-1)}{q^2} (\| \nabla v \|_{L^4}^4+1) \|  u^{q/2} \|^2.
\end{equation}
Thus, since $\nabla v$ is bounded in $L^4(Q)$ (owing to $v$ is bounded in $X_2$), 
 the Gronwall lemma implies that
\begin{equation}\label{cot-2}
\|u^{q/2}\|_{L^\infty(L^2)\cap L^2(H^1)}
\le K_2(q,\|u_0\|_{L^q},K_1).
\end{equation}

\

\underline{Step 4:} $v$ is bounded in $X_{2+}$.  

\vspace{0.1cm}
 Since $v$ is bounded in $X_2$, from Sobolev embeddings $v$ is also bounded in $L^{\infty-}(Q)$. Then, using that $f\in L^{2+}(Q_c)$ and that $u$ is bounded in $L^\infty(L^{\infty-})$, in particular,  $\alpha u^2+\alpha fv1_{\Omega_c}$ is bounded in $L^{2+}(Q)$. Then, applying the $L^{2+}$ regularity of the Heat-Neumann problem (\cite[Theorem 10.22, p. 344]{feireisl}) 
we deduce that $v\in X_{2+}$ and the following estimate holds
\begin{eqnarray}
\|v\|_{X_{2+}}&\le&\alpha\, C(\|u^2\|_{L^4(Q)}+\|f\|_{L^{2+}(Q_c)}\|v\|_{L^{\infty-}(Q)}+\|v_0\|_{W^{1+,2+}})\nonumber\\
&\le&K_3(\|f\|_{L^{2+}(Q_c)},\|v_0\|_{W^{1+,2+}}, K_2,K_1).\label{cot-2-1}
\end{eqnarray}

\

\underline{Step 5:} $u$ is bounded in $X_{2}$.  
\vspace{0.1cm}

Testing the $u$-equation in (\ref{es-2})$_1$ by $-\Delta u$ we have
\begin{equation}
\frac12\frac{d}{dt}\|\nabla u\|^2+\|\Delta u\|^2
=-(u\Delta v+\nabla u\cdot\nabla v,\Delta u).\label{cot-4}
\end{equation}
Now, applying the H\"older  and Young inequalities, and taking into account the interpolation
inequality   \eqref{2D-4} we have 
\begin{equation}   \label{cot-5}
-(u\Delta v,\Delta u) \le \|u\|_{L^{\infty-}}\|\Delta v\|_{L^{2+}}\|\Delta u\| 
\le\varepsilon\|\Delta u\|^2+C_\varepsilon\|u\|^2_{L^{\infty-}}\|\Delta v\|^2_{L^{2+}},
\end{equation}
\begin{equation}   \label{cot-6}
-(\nabla u\cdot\nabla v,\Delta u)\le\|\nabla u\|_{L^4}\|\nabla v\|_{L^4}\|\Delta u\|
\le \varepsilon\|u\|^2_{H^2}+C_\varepsilon\|\nabla u\|^2\|\nabla v\|^4_{L^4}.
\end{equation}
Replacing (\ref{cot-5}) and (\ref{cot-6}) in (\ref{cot-4}), choosing $\varepsilon$ small enough, and 
adding with the differential inequality \eqref{es-10-3} for $q=2$, we have
\begin{equation}\label{cot-7}
\frac{d}{dt}\|u\|^2_{H^1}+C\|u\|^2_{H^2}\le C\|u\|^2_{L^{\infty-}}\|\Delta v\|^2_{L^{2+}}+C(1+\|\nabla v\|^4_{L^4})\| u\|_{H^1}^2
\end{equation}
Then, from the  bounds  of $u$ in $L^\infty(L^{\infty-})$, $\Delta v$ in $L^{2+}(Q)$
and $\nabla v$ in $ L^{4+}(Q)$, the Gronwall lemma implies
\begin{equation}\label{cot-8}
\|u\|_{L^\infty(H^1)\cap L^2(H^2)}\le K_4(\|u_0\|_{H^1},K_3,K_2).
\end{equation}
Therefore, from (\ref{es-2})$_1$, (\ref{cot-2-1}) and (\ref{cot-8}) we deduce
\begin{eqnarray}
\|\partial_tu\|_{L^2(Q)}
&\le&\|\Delta u\|_{L^2(Q)}+\|u\|_{L^{\infty-}(Q)}\|\Delta v\|_{L^{2+}(Q)}+\|\nabla u\|_{L^4(Q)}\|\nabla v\|_{L^{4}(Q)}\nonumber\\
&\le&K_5(K_4,K_3),\label{cot-9}
\end{eqnarray}
which implies that $u$ is bounded in $X_2$.

\

Finally, we conclude that all elements of the set $S_\alpha$ are bounded
in $X_2\times X_{2+}$ by a constant $R$ independently of $\alpha$. The
constant $R$  follows from estimates  (\ref{cot-2-1}), (\ref{cot-8}) and (\ref{cot-9}).\qed
\end{proof}

\subsection{Proof of Theorem \ref{existence}}
\begin{proof}

\underline{Existence}

From Lemma \ref{compact} and Lemma \ref{estimates}, we obtain that operator $S$ satisfies  the
conditions of the Leray-Schauder fixed-point theorem. Consequently, there exists a fixed point
$(u,v)=S(u,v)$, which is a solution of problem (\ref{eq1})-(\ref{eq2}) (for $p=2$) in sense of Definition \ref{strong-solution}. The estimate
(\ref{bound}) follows from (\ref{cot-1}) and (\ref{cot-2}).
\vspace{0.2cm}

\underline{Uniqueness}

We consider $(u_1,v_1), (u_2,v_2) \in X_2\times X_{2+}$ two possible  solutions of system
(\ref{eq1})-(\ref{eq2}). Then, subtracting equations (\ref{eq1})-(\ref{eq2}) for $(u_1,v_2)$ and $(u_2,v_2)$, and denoting
$(u,v):=(u_1-u_2,v_1-v_2)$, we  obtain the following problem 
\begin{equation}\label{un-1}
\left\{
\begin{array}{rcl}
\partial_tu-\Delta u&=&\nabla\cdot(u_1\nabla v+u\nabla v_2)\mbox{ in }Q,\\
\partial_tv-\Delta v+v&=&u(u_1+u_2)+fv1_{\Omega_c}\mbox{ in }Q,
\end{array}
\right.
\end{equation}
endowed with the initial-boundary conditions \eqref{eq2}.
Now, testing (\ref{un-1})$_1$ by $u$ and (\ref{un-1})$_2$ by $v-\Delta v$, using that $\int_\Omega u=0$ and the $2D$ inequality  \eqref{2D-4}, and  adding the respective results we have
\begin{eqnarray*}
&&\frac{d}{dt}(\|u\|^2+\|v\|^2_{H^1})+C\| u\|_{H^1}^2+C\|v\|^2_{H^2}\nonumber\\
&&\hspace{0.5cm}\le C(\|u_1+u_2\|^4_{L^4}+\|\nabla v_2\|^4_{L^4})\|u\|^2
+ C( \|u_1\|^4_{L^4} + \|f\|^2_{L^{2+}})\| v\|_{H^1}^2 .\label{un-9}
\end{eqnarray*}
Thus, since $(u_1,\nabla v_2)\in L^4(Q)\times L^4(Q)$ and $(u_0,v_0)=(0,0)$, then  Gronwall Lemma impies  $u=v=0$, and consequently 
$u_1=u_2$ and $v_1=v_2$.\qed
\end{proof}

\section{The bilinear Optimal Control Problem}\label{sec:4}

In this section we analyze the optimal control problem (\ref{opt-4}) related to System (\ref{eq1})-(\ref{eq2}). We will prove the existence of optimal solution (Theorem \ref{optimal_solution}) and the existence of Lagrange multipliers (Theorem \ref{Lagrange}). In order to get this aim we will use a Lagrange multipliers theorem in Banach spaces (see \cite{Troltzsch} and \cite{Zowe}, for more details). We follow the arguments of \cite{Exequiel2D,Exequiel3D}.
\subsection{Proof of Theorem \ref{optimal_solution}} 
\begin{proof}The proof follows standard arguments. From Theorem \ref{existence} it holds that $\mathcal{S}_{ad}\neq\emptyset$. Let $\{x_ m\}_{m\in\mathbb{N}}=\{(u_m,v_m,f_m)\}_{m\in\mathbb{N}}\subset\mathcal{S}_{ad}$
a minimizing sequence of $J$, that is, $\displaystyle\lim_{m\rightarrow+\infty}J(x_m)=\inf_{x\in\mathcal{S}_{ad}}J(x)$. Then, by definition
of $\mathcal{S}_{ad}$,  for each $m\in\mathbb{N}$, 
$x_m$ satisfies the System
(\ref{eq1})-(\ref{eq2}). Also,
from the definition of $J$ and taking into account the assumption $\gamma_f>0$  or  $\mathcal{F}$ is bounded in $L^{2+}(Q_c)$, it follows that 
\begin{equation}\label{bound_F}
\{f_m\}_{m\in\mathbb{N}}\mbox{ is bounded in }L^{2+}(Q_c).
\end{equation}
Moreover, from  (\ref{bound}) there exists $K>0$, independent of $m$, such that
\begin{equation}\label{bound_u_v}
\|(u_m,v_m)\|_{X_2\times X_{2+}}\le K.
\end{equation}

Therefore, from (\ref{bound_F}), (\ref{bound_u_v}), and taking into account that $\mathcal{F}$ is a closed convex subset of $L^{2+}(Q_c)$
we can deduce the existence of the limit (weak-strong) 
$\tilde{x}=(\tilde{u},\tilde{v},\tilde{f})\in 
 M$
of some subsequence of $\{s_m\}_{m\in\mathbb{N}}$, still denoted by  $\{x_m\}_{m\in\mathbb{N}}$, 
such that $\tilde{x}=(\tilde{u},\tilde{v},\tilde{f})$ is solution of the system pointwisely (\ref{eq1})-(\ref{eq2}), that is, $\tilde{x}\in\mathcal{S}_{ad}$. Here we omit the details. Therefore,
\begin{equation}\label{op20}
\lim_{m\rightarrow+\infty}J(x_m)=\inf_{x\in\mathcal{S}_{ad}}J(x)\le J(\tilde{x}).
\end{equation}
Now, since $J$ is lower semicontinuous on $\mathcal{S}_{ad}$, we get $J(\tilde{x})\le \displaystyle\liminf_{m\rightarrow+\infty} J(x_m)$, which jointly to  
(\ref{op20}), implies (\ref{opsol}).\qed
\end{proof}

\subsection{First-order necessary optimality conditions} 

Concerning to differentiability of the functional $J:X\rightarrow\mathbb{R}$ and operator $G:X\rightarrow Y$ we have the following result.
\begin{lemma}
The functional $J$ is Fr\'echet differentiable and the operator $G$ is continuously Fr\'echet differentiable in
$\tilde{x}=(\tilde{u},\tilde{v},\tilde{f})\in X$, in the direction $s:=(U,V,F)$. The respective derivatives are given by 
\begin{eqnarray}
J'(\tilde{x})[s]&=&\gamma_u\int_0^T\int_{\Omega_d} (\tilde{u}-u_d)U+\gamma_v\int_0^T\int_{\Omega_d}(\tilde{v}-v_d)V\nonumber\\
&&+\gamma_f\int_0^T\int_{\Omega_c}{\rm sgn}(\tilde{f})|\tilde{f}|^{1+}F,\label{derJ}\\
G_1'(\tilde{x})[s]&=&\partial_tU-\Delta U-\nabla\cdot(U\nabla\tilde{v})-\nabla\cdot(\tilde{u}\nabla V),\label{derG-1}\\
G_2'(\tilde{x})[s]&=&\partial_tV-\Delta V+V-2\tilde{u}U-\tilde{f}V1_{\Omega_c}-F\tilde{v}.\label{derG-2}
\end{eqnarray}
\end{lemma}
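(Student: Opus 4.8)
The plan is to verify the two differentiability claims by direct computation, treating $J$ and $G$ separately since they involve different functional-analytic tools.

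\textbf{Differentiability of $G$.} The operator $G=(G_1,G_2)\colon X\to Y$ is, by inspection of \eqref{opt-3}, a sum of bounded linear operators (the terms $\partial_t u-\Delta u$, $\partial_t v-\Delta v+v$) and of the bilinear/quadratic terms $\nabla\cdot(u\nabla v)$, $u^2$ and $fv1_{\Omega_c}$. For each of these nonlinear terms I would expand around $\tilde x=(\tilde u,\tilde v,\tilde f)$ in the direction $s=(U,V,F)$: for instance $(\tilde u+U)^2-\tilde u^2=2\tilde u U+U^2$, so that the linear part is $2\tilde u U$ and the remainder $U^2$; similarly $\nabla\cdot((\tilde u+U)\nabla(\tilde v+V))-\nabla\cdot(\tilde u\nabla\tilde v)=\nabla\cdot(U\nabla\tilde v)+\nabla\cdot(\tilde u\nabla V)+\nabla\cdot(U\nabla V)$, whose linear part gives the two terms in \eqref{derG-1}; and $(\tilde f+F)(\tilde v+V)1_{\Omega_c}-\tilde f\tilde v1_{\Omega_c}=(\tilde f V+F\tilde v)1_{\Omega_c}+FV1_{\Omega_c}$. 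It then remains to check that each quadratic remainder is $o(\|s\|_X)$ in the $Y=L^2(Q)\times L^{2+}(Q)$ norm. This requires Hölder-type estimates using the embeddings built into the definitions of $X_2$ and $X_{2+}$: e.g. $\|U^2\|_{L^{2+}(Q)}\le C\|U\|_{L^{4+}(Q)}^2$ with $X_2\hookrightarrow L^{4+}(Q)$ (via \eqref{2D-4}), $\|\nabla\cdot(U\nabla V)\|_{L^2(Q)}\le\|U\|_{L^{\infty-}}\|\Delta V\|_{L^{2+}}+\|\nabla U\|_{L^{4-}}\|\nabla V\|_{L^{4+}}\le C\|U\|_{X_2}\|V\|_{X_{2+}}$, and $\|FV1_{\Omega_c}\|_{L^{2+}(Q)}\le\|F\|_{L^{2+}(Q_c)}\|V\|_{L^{\infty-}(Q)}\le C\|F\|_{L^{2+}(Q_c)}\|V\|_{X_{2+}}$. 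Each bound is quadratic in $\|s\|_X$, hence a remainder; moreover the linear parts depend continuously on the base point $\tilde x$ in the operator norm $\mathcal L(X,Y)$ (the maps $\tilde u\mapsto(U\mapsto 2\tilde uU)$ etc.\ are themselves linear and bounded), which gives continuous Fréchet differentiability, i.e.\ $G\in C^1(X;Y)$.

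\textbf{Differentiability of $J$.} The first two summands of $J$ are (restrictions of) continuous quadratic forms on $L^2(Q_d)$, so their Gâteaux derivative is the obvious one and Fréchet differentiability is standard: $\tfrac{\gamma_u}{2}\|(\tilde u+U)-u_d\|^2-\tfrac{\gamma_u}{2}\|\tilde u-u_d\|^2=\gamma_u\int_0^T\int_{\Omega_d}(\tilde u-u_d)U+\tfrac{\gamma_u}{2}\|U\|^2_{L^2(Q_d)}$, with remainder $O(\|U\|^2)$. The only delicate term is $\tfrac{\gamma_f}{2+}\int_0^T\int_{\Omega_c}|f|^{2+}$, i.e.\ the $\tfrac1p\|\cdot\|_{L^p}^p$ functional with $p=2+\varepsilon$. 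Here I would invoke the classical fact that $f\mapsto\int|f|^p$ is Fréchet differentiable on $L^p$ for $1<p<\infty$ with derivative $F\mapsto p\int|f|^{p-2}fF=p\int\mathrm{sgn}(f)|f|^{p-1}F$; since $p=2+>2>1$ this applies and yields the term $\gamma_f\int_0^T\int_{\Omega_c}\mathrm{sgn}(\tilde f)|\tilde f|^{1+}F$ in \eqref{derJ}. (The point of choosing the exponent strictly above $2$, rather than exactly $2$, is precisely that the map $t\mapsto|t|^p$ is then $C^1$ with locally Hölder derivative, so the Nemytskii operator $f\mapsto|f|^{p-2}f$ is continuous $L^p\to L^{p'}$ and the remainder estimate goes through.) Collecting the three pieces gives \eqref{derJ}.

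\textbf{Main obstacle.} The bulk of the work, and the only genuinely non-routine point, is the chemotaxis remainder $\nabla\cdot(U\nabla V)$: one must confirm that it lands in $L^2(Q)$ and is $o(\|s\|_X)$, which is exactly where the regularity built into $X_2\times X_{2+}$ (continuous embedding $X_2\hookrightarrow L^{4+}(W^{1,4+})$ via the $2D$ interpolation inequality \eqref{2D-4}, and $X_{2+}\hookrightarrow L^{\infty-}(Q)$ with $\nabla v\in L^{4+}(Q)$) is indispensable — this is the same structural estimate that made the fixed-point argument in Lemma \ref{compact} work. Everything else (the quadratic and bilinear reaction terms, the quadratic tracking terms, the $L^{2+}$-power term) reduces to standard Hölder inequalities and the known differentiability of $\|\cdot\|_{L^p}^p$. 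Continuity of $G'$ in $\tilde x$ follows because all the linearized coefficients depend linearly — hence continuously — on $\tilde u,\tilde v,\tilde f$ in the relevant norms.
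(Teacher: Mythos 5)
Your proposal is correct: since $G$ is a quadratic polynomial map, the Fréchet derivative is read off from the exact second-order expansion, the whole proof reduces to the Hölder/embedding estimates showing the bilinear terms are bounded $X\times X\to Y$ (which are the same product estimates already used in Lemma \ref{compact}), and the $L^{2+}$-power term in $J$ is handled by the classical $C^1$-differentiability of $\|\cdot\|_{L^p}^p$ for $1<p<\infty$. The paper states this lemma without proof, evidently regarding it as routine, so there is no authorial argument to compare against; your write-up supplies exactly the standard verification that is being left implicit.
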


In order to derive first-order necessary optimality conditions for a local optimal solution $(\tilde{u},\tilde{v},\tilde{f})$ of control problem (\ref{opt-4}), 
we must first prove the existence of Lagrange multipliers. The existence of Lagrange multipliers is guaranteed if a local optimal solution
is a regular point of operator $G$ (see \cite[Theorem 6.3, p. 330]{Troltzsch} or \cite[Theorem 3.1]{Zowe}). 
That is the content of the following result.
\begin{lemma}\label{regular}
Let $\tilde{x}=(\tilde{u},\tilde{v},\tilde{f})\in\mathcal{S}_{ad}$ be a admissible element of bilinear control problem (\ref{opt-4}).
Then, $\tilde{x}$ is regular point, that is, given $(g_u,g_v)\in L^2(Q)\times L^{2+}(Q)$ there exists 
$s=(U,V,F)\in \widehat{X}_2\times\widehat{X}_{2+}\times\mathcal{C}(\tilde{f})$ such that
$$
G'(\tilde{x})[s]=(g_u,g_v),
$$
where $\mathcal{C}(\tilde{f}):=\{\theta(f-\tilde{f})\,:\, \theta\ge0,\, f\in\mathcal{F}\}$ is the conical hull  of $\tilde{f}$ in
$\mathcal{F}$.
\end{lemma}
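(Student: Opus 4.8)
The plan is to show that the linearized operator $G'(\tilde{x})$ is surjective onto $L^2(Q)\times L^{2+}(Q)$ when restricted to directions of the form $s=(U,V,F)$ with $F=0$ (which certainly lies in $\mathcal{C}(\tilde{f})$ since $0=\theta(\hat{f}-\tilde{f})$ with $\theta=0$, or more simply $0 = 0\cdot(f-\tilde f)$). Thus it suffices to solve, for given $(g_u,g_v)\in L^2(Q)\times L^{2+}(Q)$, the linear system
\begin{equation*}
\left\{
\begin{array}{rcl}
\partial_tV-\Delta V+V-\tilde{f}V1_{\Omega_c}&=&g_v+2\tilde{u}U \quad\text{in }Q,\\
\partial_tU-\Delta U-\nabla\cdot(U\nabla\tilde{v})&=&g_u+\nabla\cdot(\tilde{u}\nabla V)\quad\text{in }Q,
\end{array}
\right.
\end{equation*}
with homogeneous Neumann boundary conditions and $U(0)=V(0)=0$, and to check that the solution belongs to $\widehat{X}_2\times\widehat{X}_{2+}$. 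Note the system is \emph{coupled} (the $V$-equation carries $U$ on its right-hand side and the $U$-equation carries $V$), so it cannot be solved in an uncoupled cascade as in Lemma \ref{compact}; this is the core difficulty.

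I would handle the coupling by a Leray--Schauder (or Banach fixed point on a short time interval, then continued) argument, exactly mirroring the structure already used in Section \ref{sec:3}. Define an operator $T:\bar U\mapsto U$ on a suitable space $W_u$ (e.g.\ $L^{4-}(W^{1,4-})\cap L^{\infty-}(Q)$) by: first solve the $V$-equation with source $g_v+2\tilde u\bar U$ — since $\tilde u\in L^{\infty-}(Q)$ (as $\tilde u\in X_2\hookrightarrow L^{\infty-}(Q)$ in $2D$) and $\tilde f\in L^{2+}(Q_c)$, the right-hand side lies in $L^{2+}(Q)$, so parabolic $L^{2+}$ regularity (\cite[Theorem 10.22]{feireisl}) gives $V\in X_{2+}$; then, using $\nabla V\in L^{4+}(Q)$ and $\tilde u\in L^{\infty-}(Q)$, $\Delta\tilde u$\dots\ — more carefully, $\nabla\cdot(\tilde u\nabla V)=\tilde u\Delta V+\nabla\tilde u\cdot\nabla V\in L^2(Q)$ because $\tilde u\in X_2$ gives $\Delta\tilde u\in L^2(Q)$ wait that term does not appear; rather the regularity of $\tilde u$ in $L^\infty(H^1)$ and $\nabla V\in L^{4+}$ handle $\nabla\tilde u\cdot\nabla V$, while $\Delta V\in L^{2+}$ and $\tilde u\in L^{\infty-}$ handle $\tilde u\Delta V$. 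Hence $g_u+\nabla\cdot(\tilde u\nabla V)\in L^2(Q)$, and parabolic $L^2$ regularity gives $U\in X_2$. The map $T$ is then compact from $W_u$ into itself (by the compact embedding $X_2\hookrightarrow W_u$ in $2D$, already noted in the paper), and continuous.

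For the a priori bound on fixed points $U=\theta\, T(U)$, $\theta\in[0,1]$, I would run an energy estimate directly on the coupled linear system: test the $U$-equation by $U$, the $V$-equation by $V-\Delta V$, use $\int_\Omega U=0$ (integrating the $U$-equation in $\Omega$, since $g_u$ need not have zero mean one instead works with $U$ itself and controls the mass term via Gagliardo--Nirenberg \eqref{G-N-2} as in Step~2 of Lemma \ref{estimates}), the $2D$ interpolation \eqref{2D-4} to absorb the chemotaxis-type cross terms $\nabla\cdot(U\nabla\tilde v)$ and $\nabla\cdot(\tilde u\nabla V)$ into the dissipation at the cost of factors $\|\nabla\tilde v\|_{L^4}^4$, $\|\tilde u\|_{L^{\infty-}}^2$, $\|\nabla\tilde u\|_{L^4}^4$ which are all finite and integrable in time, and then Gronwall. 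This yields $\|(U,V)\|_{X_2\times X_{2+}}\le C(g_u,g_v,\tilde x)$, independent of $\theta$. Leray--Schauder then produces a fixed point, i.e.\ a solution with $F=0$, and since $\widehat{X}_2,\widehat{X}_{2+}$ encode the zero initial data, $(U,V,0)\in\widehat{X}_2\times\widehat{X}_{2+}\times\mathcal{C}(\tilde f)$ with $G'(\tilde x)[(U,V,0)]=(g_u,g_v)$, proving regularity of $\tilde x$.

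The main obstacle is the simultaneous coupling and the low-order (only $L^{2+}$, not $L^{4}$ or better) source allowed in the second component: one must be careful that the term $\nabla\cdot(\tilde u\nabla V)$ generated by the fixed-point iteration really lands in $L^2(Q)$, which uses in an essential way the $2D$ strong-solution regularity of $\tilde x$ established in Theorem \ref{existence} (namely $\tilde u\in L^\infty(H^1)\cap L^2(H^2)$, $\nabla\tilde v\in L^{4+}(Q)$, $\Delta\tilde v\in L^{2+}(Q)$). Everything else is a routine repetition of the estimates of Section \ref{sec:3}, so I would state it as such and refer back rather than redo the computations.
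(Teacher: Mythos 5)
Your proposal is correct and follows essentially the same route as the paper: take $F=0$, solve the resulting coupled linear system by a Leray--Schauder fixed point that decouples the equations (solve $V$ first, then $U$), obtain $\alpha$-uniform bounds by testing with $U$ and $V-\Delta V$ plus the $2D$ interpolation inequality and Gronwall, and bootstrap to $X_2\times X_{2+}$ via $L^{2+}$ parabolic regularity and the $H^2$-estimate for $U$. The only cosmetic difference is that the paper freezes both $\bar U$ and $\bar V$ (including the term $\tilde f\bar V 1_{\Omega_c}$) in the fixed-point map, whereas you freeze only $\bar U$; this changes nothing essential.
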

\begin{proof}
Let $(\tilde{u},\tilde{v},\tilde{f})\in\mathcal{S}_{ad}$ and  
$(g_u,g_v)\in L^2(Q)\times L^{2+}(Q)$. Since $0\in\mathcal{C}(\tilde{f})$ and taking into account \eqref{derG-1} and \eqref{derG-2}, we deduce that it suffices
to prove the existence of $(U,V)\in X_2\times X_{2+}$ solution of the  linear problem 
\begin{equation}\label{reg-1}
\left\{\begin{array}{rcl}
\partial_tU-\Delta U-\nabla\cdot(U\nabla\tilde{v})-\nabla\cdot(\tilde{u}\nabla V)&=&g_u\ \mbox{ in }\ Q,\\
\partial_tV-\Delta V+V-2\,\tilde{u}\,U-\tilde{f}V1_{\Omega_c}&=&g_v\ \mbox{ in }\ Q,\\
U(0)=0,\ V(0)&=&0\ \mbox{ in }\ \Omega,\\
\dfrac{\partial U}{\partial{\bf n}}=0,\ \dfrac{\partial V}{\partial{\bf n}}&=&0\ \mbox{ in }\ (0,T)\times\partial\Omega.
\end{array}
\right.
\end{equation}
In order to prove the existence of solution of system (\ref{reg-1}) we will use the Leray-Schauder fixed-point theorem. Then,
we consider the operator
$$\overline{S}:(\overline{U},\overline{V})\in W_u\times W_v\mapsto (U,V)\in X_2\times X_{2+},
$$ 
with $(U,V)$ the solution of the decoupled problem
\begin{equation}\label{reg-2}
\left\{
\begin{array}{rcl}
\partial_tU-\Delta U-\nabla\cdot(\tilde{u}\nabla V)&=&\nabla\cdot(\bar{U}\nabla\tilde{v})+g_u\ \mbox{ in }\ Q,\\
\partial_tV-\Delta V+V&=&2\tilde{u}\bar{U}+\tilde{f}\bar{V}1_{\Omega_c}+g_v\ \mbox{ in }\ Q,
\end{array}
\right.
\end{equation}
endowed with the  initial and boundary conditions of \eqref{derG-2}. It is not difficult to verify that
operator $\overline{S}$ is well-defined from $W_u\times W_v$ to $X_2\times X_{2+}$, and completely continuous
from $W_u\times W_v$ to itself.

Now, we will prove that the set 
$$
\overline{S}_\alpha:=\{(U,V)\in X_2\times X_{2+}\,:\, (U,V)=\alpha\, \overline{S}(U,V)\mbox{ for some }\alpha \in[0,1] \}
$$
is bounded in $X_2\times X_{2+}$. Indeed, if $(U,V)$ belongs to $\overline{S}_\alpha$, then $(U,V)\in X_2\times X_{2+}$
and solves the linear problem 
\begin{equation}\label{reg-3}
\left\{
\begin{array}{rcl}
\partial_tU-\Delta U-\nabla\cdot(\tilde{u}\nabla V)&=&\alpha\nabla\cdot({U}\nabla\tilde{v})+\alpha g_u\ \mbox{ in }\ Q,\\
\partial_tV-\Delta V+V&=&2\alpha\tilde{u}{U}+\alpha\tilde{f}{V}1_{\Omega_c}+\alpha g_v\ \mbox{ in }\ Q,
\end{array}
\right.
\end{equation}
with the  initial and boundary conditions of \eqref{derG-2}. Testing (\ref{reg-3})$_1$ by $U$ and (\ref{reg-3})$_2$ by
$V-\Delta V$, using the H\"older, Young and 2D interpolation inequality (\ref{2D-4}), and adding the respective results, we can obtain
\begin{eqnarray*}\label{reg-4}
&&\frac{d}{dt}(\|U\|^2+\|V\|^2_{H^1})+C(\|U\|^2_{H^1}+\|V\|^2_{H^2})
\nonumber\\
&&\quad \le C(1+\|\tilde{u}\|^4_{L^4}+\|\tilde{u}\|^2_{L^4}+\|\nabla\tilde{v}\|^4_{L^4})\|U\|^2\nonumber\\
&&\quad +C(\|\tilde{u}\|^4_{L^4}+\|\tilde{f}\|^2_{L^{2+}})\|V\|^2_{H^1}+C(\|g_u\|^2+\|g_v\|^2).
\end{eqnarray*}
From the bounds of $(\tilde{u},\nabla\tilde{v})$ in $L^4(Q)\times L^4(Q)$, the Gronwall lemma implies 
\begin{equation}\label{reg-5}
\|U\|_{L^\infty(L^2)\cap L^2(H^1)}+\|V\|_{L^\infty(H^1)\cap L^2(H^2)}\le C.
\end{equation}
In particular, one has the bounds of $U$ in $L^4(Q)$ and $V$ in $L^\infty(L^{\infty-})$, hence it is deduced that 
$$
2\,\alpha\,\tilde{u}\,U+{\alpha\tilde{f}V1_{\Omega_c}}+\alpha g_v \quad \hbox{is bounded in } L^{2+}(Q).
$$
Then, applying $L^{2+}$-parabolic regularity in (\ref{reg-3})$_2$ (\cite[Theorem 10.22, p. 344]{feireisl}), 
we deduce that $V\in X_{2+}$ and the following estimate holds
\begin{eqnarray} \label{reg-6}
\|V\|_{X_{2+}}\le C.
\end{eqnarray}
Now, by using a similar argument made to arrive at \eqref{cot-7}, one has
\begin{equation*}\label{cot-7a}
\frac{d}{dt}\|U\|^2_{H^1}+C\|U\|^2_{H^2}\le C\|\tilde u\|^2_{L^{\infty-}}\|\Delta V\|^2_{L^{2+}}
+ C \|\nabla \tilde u\|^4_{L^4}  \|\nabla V\|^4_{L^4} 
\end{equation*}
$$
+C(1+\|\nabla \tilde v\|^4_{L^4} + \|\Delta \tilde v\|^2_{L^{2+}})\| U\|_{H^1}^2
+C\|g_u\|^2.
$$
Then,  
taking into account 
estimate (\ref{reg-6}),
the Gronwall Lemma implies that 
$U$ is bounded in $L^\infty(H^1)\cap L^2(H^2)$, independent of the parameter $\alpha$. Finally, following similar arguments as in 
\eqref{cot-9}, we conclude that $\partial_tU$ is bounded in $L^2(Q)$. Consequently,
 $U$ is also bounded in $X_2$.

Therefore, we can deduce the existence of solution for system (\ref{reg-1}) from the Leray-Schauder fixed-point theorem. Moreover, using a classical comparison argument we can prove
that $(U,V)\in X_2\times X_{2+}$ is the unique solution of problem (\ref{reg-1}) for
$(g_u,g_v)\in L^2(Q)\times L^{2+}(Q)$.\qed
\end{proof}

\subsection{Proof of Theorem \ref{Lagrange}}
\begin{proof}
The proof follows directly from Lemma \ref{regular} and  \cite[Theorem 3.1]{Zowe} (see, also \cite[Theorem 6.3, p. 330]{Troltzsch}) and \eqref{derJ}.\qed
\end{proof}
Now, from Theorem \ref{Lagrange} we derive an optimality system for the bilinear optimal control problem (\ref{opt-4}).
\begin{corollary}\label{opt-syst}
Let $\tilde{x}=(\tilde{u},\tilde{v},\tilde{f})$ be a local optimal solution for the optimal control problem
(\ref{opt-4}). Then, the Lagrange multiplier $(\lambda,\eta)\in L^2(Q)\times (L^{2+}(Q))'$ satisfies the system 
\begin{eqnarray}
&&\int_0^T\int_\Omega\big(\partial_tU-\Delta U-\nabla\cdot(U\nabla\tilde{v})\big)\lambda-2\int_0^T\int_\Omega\tilde{u}U\eta\nonumber\\
&&=\gamma_u\int_0^T\int_{\Omega_d}(\tilde{u}-u_d)U\ \ \forall U\in\widehat{X}_2,\label{adj-1}\\
&&\int_0^T\int_\Omega\big(\partial_tV-\Delta V+V\big)\eta
-\int_0^T\int_\Omega\nabla\cdot(\tilde{u}\nabla V)\lambda
-\int_0^T\int_{\Omega_c}\tilde{f}V\eta\nonumber\\
&&=\gamma_v\int_0^T\int_{\Omega_d}(\tilde{v}-v_d)V\ \ \forall V\in\widehat{X}_{2+},\label{adj-2}
\end{eqnarray}
and the optimality condition
\begin{equation}\label{adj-3}
\int_0^T\int_{\Omega_c}\big(\gamma_f{\rm sgn}(\tilde{f})|\tilde{f}|^{1+}+\tilde{v}\eta\big)(f-\tilde{f})\ge 0\ \ \forall{f}\in\mathcal{F}.
\end{equation}
\end{corollary}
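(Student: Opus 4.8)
The plan is to read off the three components of the optimality system directly from the variational inequality \eqref{lag-1} of Theorem \ref{Lagrange}, exploiting the structure of the admissible set of directions $\widehat{X}_2\times\widehat{X}_{2+}\times\mathcal{C}(\tilde f)$. The decisive point is that $\widehat{X}_2$ and $\widehat{X}_{2+}$ are \emph{linear} subspaces, whereas $\mathcal{C}(\tilde f)$ is only a convex cone containing $0$: hence directions that involve only $U$ (or only $V$) may be replaced by their opposites, turning \eqref{lag-1} into the equalities \eqref{adj-1}--\eqref{adj-2}, while directions that involve only $F$ retain a one-sided character and produce \eqref{adj-3}.

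First I would test \eqref{lag-1} with $s=(U,0,0)$, $U\in\widehat{X}_2$ arbitrary. Since $0\in\mathcal{C}(\tilde f)$, all terms carrying $V$ or $F$ drop out, leaving $\gamma_u\int_0^T\int_{\Omega_d}(\tilde u-u_d)U+\int_0^T\int_\Omega(\partial_tU-\Delta U-\nabla\cdot(U\nabla\tilde v))\lambda-2\int_0^T\int_\Omega\tilde u\,U\,\eta\ge0$; replacing $U$ by $-U$ (legitimate because $\widehat{X}_2$ is a vector space) gives the reverse inequality, hence \eqref{adj-1}. Analogously, testing with $s=(0,V,0)$, $V\in\widehat{X}_{2+}$ arbitrary, annihilates every term except the $\gamma_v$-term, the summand $-\nabla\cdot(\tilde u\nabla V)$ inside the $\lambda$-integral, and the whole $\eta$-integral $\int_0^T\int_\Omega(\partial_tV-\Delta V+V-\tilde f V1_{\Omega_c})\eta$; using once more the vector-space structure to upgrade the inequality to an equality yields \eqref{adj-2}. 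Finally, I would test with $s=(0,0,F)$ where $F=f-\tilde f\in\mathcal{C}(\tilde f)$ for an arbitrary $f\in\mathcal{F}$ (take $\theta=1$ in the definition of $\mathcal{C}(\tilde f)$); this leaves only $\gamma_f\int_0^T\int_{\Omega_c}{\rm sgn}(\tilde f)|\tilde f|^{1+}(f-\tilde f)+\int_0^T\int_{\Omega_c}(f-\tilde f)\,\tilde v\,\eta\ge0$, which is exactly \eqref{adj-3}.

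I do not anticipate a genuine obstacle: the argument is essentially bookkeeping of which terms of \eqref{lag-1} survive each choice of direction. The only point requiring care is that each formal space-time integral appearing in \eqref{adj-1}--\eqref{adj-3} is in fact a well-defined duality pairing. Concretely, $\partial_tU-\Delta U-\nabla\cdot(U\nabla\tilde v)$ and $\nabla\cdot(\tilde u\nabla V)$ lie in $L^2(Q)$ — using $U\in X_2$, $V\in X_{2+}$, $\tilde u\in L^{\infty-}(Q)$, $\nabla\tilde u\in L^{4-}(Q)$, $\nabla\tilde v\in L^{4+}(Q)$, exactly as in the proof of Lemma \ref{compact} — so they pair with $\lambda\in L^2(Q)$; and $\tilde u\,U$, $\partial_tV-\Delta V+V-\tilde f V1_{\Omega_c}$ and $\tilde v\,F$ lie in $L^{2+}(Q)$ — using the $2D$ embedding \eqref{2D-4} and $\tilde v\in L^{\infty-}(Q)$ — so they pair with $\eta\in L^{2+}(Q)'$. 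All these memberships are furnished by Theorem \ref{existence} together with the estimates of Section \ref{sec:3}, after which the manipulations above are rigorous and the corollary follows.
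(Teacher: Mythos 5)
Your proposal is correct and follows essentially the same route as the paper: substitute the directions $(U,0,0)$, $(0,V,0)$ and $(0,0,f-\tilde f)$ into \eqref{lag-1}, use the vector-space structure of $\widehat{X}_2$ and $\widehat{X}_{2+}$ to upgrade the first two inequalities to equalities, and keep the one-sided condition on the cone $\mathcal{C}(\tilde f)$ for \eqref{adj-3}. Your additional remark that each integral is a well-defined duality pairing is a sensible precaution but does not change the argument.
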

\begin{proof}
Taking $(V,F)=(0,0)$ and $(U,F)=(0,0)$ in (\ref{lag-1}), we deduce (\ref{adj-1}) and (\ref{adj-2}), respectively. Similarly,
taking $(U,V)=(0,0)$ in (\ref{lag-1}) we have
$$
\int_0^T\int_{\Omega_c}(\gamma_f{\rm sgn}(\tilde{f})|\tilde{f}|^{1+}+\tilde{v}\eta)F\ge\ \ \forall F\in\mathcal{C}(\tilde{f}).
$$
In particular, choosing $F=f-\tilde{f}\in\mathcal{C}(\tilde{f})$ for any $f\in\mathcal{F}$,  we deduce (\ref{adj-3}).\qed
\end{proof}
\begin{remark}\label{opt-syst1}
System (\ref{adj-1})-(\ref{adj-2}) correspond to the concept of very weak solution of the adjoint system
\begin{equation}\label{adj-pb}
\left\{
\begin{array}{rcl}
-\partial_t\lambda-\Delta\lambda+\nabla\lambda\cdot\nabla\tilde{v}+2\tilde{u}\eta&=&\gamma_u(\tilde{u}-u_d)1_{\Omega_d}\ \mbox{ in }\ Q,\\
-\partial_t\eta-\Delta\eta-\nabla\cdot(\tilde{u}\nabla\lambda)+\eta-\tilde{f}\eta 1_{\Omega_c}&=&\gamma_v(\tilde{v}-v_d)1_{\Omega_d}\ \mbox{ in }\ Q,\\
\lambda(T)&=&0,\ \eta(T)=0\ \mbox{ in }\ \Omega,\\
\dfrac{\partial\lambda}{\partial{\bf n}}&=&0,\ \dfrac{\partial\eta}{\partial{\bf n}}=0\ \mbox{ in }(0,T)\times\partial\Omega.
\end{array}
\right.
\end{equation}
\end{remark}

\subsection{Proof of Theorem \ref{regu-lagrange}}
\begin{proof}
Let $s=T-t$, with $t\in(0,T)$ and $\tilde\lambda(s)=\lambda(t)$, $\tilde\eta(s)=\eta(t)$. Then problem
(\ref{adj-pb}) is equivalent to the system
\begin{equation}\label{regu-2}
\left\{
\begin{array}{rcl}
\partial_s\tilde\lambda-\Delta\tilde\lambda+\nabla\tilde\lambda\cdot\nabla\tilde{v}+2\tilde{u}\tilde\eta&=&\gamma_u(\tilde{u}-u_d)1_{\Omega_d}\ \mbox{ in }\ Q,\\
\partial_s\tilde\eta-\Delta\tilde\eta-\nabla\cdot(\tilde{u}\nabla\tilde\lambda)+\tilde\eta-\tilde{f}\tilde\eta1_{\Omega_c}&=&\gamma_v(\tilde{v}-v_d)1_{\Omega_d}\ \mbox{ in }\ Q,\\
\tilde\lambda(0)&=&0,\ \tilde\eta(0)=0\ \mbox{ in }\ \Omega,\\
\dfrac{\partial\tilde\lambda}{\partial{\bf n}}&=&0,\ \dfrac{\partial\tilde\eta}{\partial{\bf n}}=0\ \mbox{ on }\ (0,T)\times\Omega.
\end{array}
\right.
\end{equation}
Since system (\ref{regu-2}) is linear, we argue in a formal manner proving that any regular enough solution
is bounded in $X_2\times X_{(4/3)+}.$ An exhaustive proof would be done using Leray-Schauder
fixed-point theorem (following the proof of Lemma \ref{regular}). With this aim, testing (\ref{regu-2})$_1$ by $\tilde\lambda-\Delta\tilde\lambda$
and (\ref{regu-2})$_2$ by $\tilde\eta$, using the 2D interpolation inequality (\ref{2D-4}), and adding the respective
results we can obtain
\begin{eqnarray*}
\frac{d}{ds}(\|\tilde\lambda\|^2_{H^1}+\|\tilde\eta\|^2)&+&C(\|\tilde\lambda\|^2_{H^2}+\|\tilde\eta\|^2_{H^1})\nonumber\\
&\le&C(\|\tilde{u}\|^4_{L^4}+\|\nabla\tilde{v}\|^4_{L^4})\|\tilde\lambda\|^2_{H^1}
+C(\|\tilde{u}\|^4_{L^4}+\|\tilde{f}\|^2_{L^{2+}})\|\tilde\eta\|^2\nonumber\\
&&+C\gamma_u^2\|\tilde{u}-u_d\|^2+C\gamma_v^2\|\tilde{v}-v_d\|^2.\label{regu-3}
\end{eqnarray*}
Therefore, since $\tilde{u},\nabla\tilde{v}\in {L^4}(Q)$, the Gronwall Lemma implies
$$
(\tilde\lambda,\tilde\eta)\in L^\infty(H^1\times L^2)\cap L^2(H^2\times H^1).
$$
In particular, $\nabla\tilde\lambda\cdot\nabla\tilde{v}\in L^2(Q)$, hence also $\partial_s\tilde\lambda\in L^2(Q)$, and finally  $\tilde\lambda\in X_2$. Furthermore, using that $\tilde{f}$ belongs to $L^{2+}(Q_c)$ and $\tilde\eta\in L^\infty(L^2)\cap L^2(H^1)\hookrightarrow L^4(Q)$, we get
\begin{equation}\label{regu-4}
\tilde{f}\tilde\eta1_{\Omega_c}\in L^{(4/3)+}(Q).
\end{equation}
 Also, taking into account that
$\tilde{u},\tilde\lambda\in X_2,$ we obtain
\begin{equation}\label{regu-5}
\nabla\cdot(\tilde{u}\nabla\tilde\lambda)=\tilde{u}\Delta\tilde\lambda+\nabla\tilde{u}\cdot\nabla\tilde\lambda
\in L^{2-}(Q).
\end{equation}
Thus, in particular,  $\nabla\cdot(\tilde{u}\nabla\tilde\lambda)$ belongs to $L^{(4/3)+}(Q)$.
Then, from (\ref{regu-4}), (\ref{regu-5}) and applying $L^{(4/3)+}$-regularity in (\ref{regu-2})$_2$ (\cite[Theorem 10.22, p. 344]{feireisl}) we conclude that $\tilde\eta\in X_{(4/3)+}$.
Consequently, we deduce that system (\ref{adj-pb}) has a unique solution $(\lambda,\eta)\in X_2\times  X_{(4/3)+}$.

\

Now we consider $(\lambda,\eta)\in L^2(Q)\times (L^{2+}(Q))'$ the Lagrange multiplier provided by Theorem
 \ref{Lagrange} and $(\hat\lambda,\hat\eta)\in X_2\times  X_{(4/3)+}$ the unique solution of system
 (\ref{adj-pb}). Thus, it suffices to identify $(\lambda,\eta)$ with $(\hat\lambda,\hat\eta)$. For that,
 we consider the unique solution $(U,V)\in X_2\times X_{2+}$ of problem (\ref{reg-1}) for 
 $g_u:=\lambda-\hat\lambda\in L^2(Q)$ and $g_v:={\rm sgn}(\eta-\hat\eta)|\eta-\hat\eta|^{1+}\in L^{2+}(Q)$. Then, writing  problem
 (\ref{adj-pb}) for $(\hat\lambda,\hat\eta)$ instead of $(\lambda,\eta)$, and testing the first equation
 by $U$ and the second equation by $V$, after integrating by parts on $\Omega$, we can obtain 
 
 \begin{eqnarray}
 &&\displaystyle\int_0^T\int_\Omega(\partial_tU-\Delta U-\nabla\cdot(U\nabla\tilde{v}))\hat\lambda
 -2\int_0^T\int_\Omega\tilde{u}U\hat\eta\nonumber\\
 &&=\gamma_u\displaystyle\int_0^T\int_{\Omega_d}(\tilde{u}-u_d)U,\label{regu-6}\\
&&\displaystyle \int_0^T\int_\Omega(\partial_tV-\Delta V+V)\hat\eta-\int_0^T\int_\Omega\nabla\cdot(\tilde{u}\nabla V)\hat\lambda-\int_0^T\int_{\Omega_c}\tilde{f}V\hat\eta\nonumber\\
&&=\gamma_v\displaystyle\int_0^T\int_{\Omega_d}(\tilde{v}-v_d)V.\label{regu-7}
 \end{eqnarray}
Making the difference between (\ref{adj-1}) and (\ref{regu-6}) and between (\ref{adj-2}) and (\ref{regu-7}),
and then summing the respective equalities, we have 
\begin{eqnarray}\label{regu-8}
&&\int_0^T\int_\Omega(\partial_tU-\Delta U-\nabla\cdot(U\nabla\tilde{v}))(\lambda-\hat\lambda)
-2\int_0^T\int_\Omega\tilde{u}U(\eta-\hat\eta)\nonumber\\
&&+\int_0^T\int_\Omega(\partial_tV-\Delta V+V)(\eta-\hat\eta)
-\int_0^T\int_\Omega\nabla\cdot(\tilde{u}\nabla V)(\eta-\hat\eta)\nonumber\\
&&-\int_0^T\int_{\Omega_c}\tilde{f}V(\eta-\hat\eta)=0.
\end{eqnarray}
Thus, taking into account that the pair $(U,V)$ is the unique  solution of (\ref{reg-1}), with data
$g_u=\lambda-\hat\lambda\in L^2(Q)$ and $g_v={\rm sgn}(\eta-\hat\eta)|\eta-\hat\eta|^{1+}\in L^{2+}(Q)$, from (\ref{regu-8}) we deduce that
$$
\|\lambda-\hat\lambda\|^2_{L^2(Q)}+\|\eta-\hat\eta\|^2 _{L^{2}(Q)}=0,
$$
which implies that $\lambda=\hat\lambda$ in $L^2(Q)$ and $\eta=\hat\eta$ in $L^{2+}(Q)$. Therefore, due to the regularity of $(\hat\lambda,\hat\eta)$, we conclude that the Lagrange multipliers 
$(\lambda,\eta)$ satisfy (\ref{regu-1}).\qed
\end{proof}

\section{The nonlinear production case $u^p,$ $1<p<2$}\label{sec:5}
In this section we consider the following chemo-repulsion model with nonlinear production $u^p$ for $1<p<2$:
\begin{equation}\label{eq1n}
\left\{
\begin{array}{rcl}
\partial_tu-\Delta u&=&\nabla\cdot(u\nabla v)\ {\mbox{in} \ Q,}\\
\partial_tv-\Delta v+v&=&u^p+fv\, 1_{\Omega_c}\ { \mbox{in} \ Q,}
\end{array}
\right.
\end{equation}
endowed with the initial-boundary conditions \eqref{eq2}. 
Then, the aim of this section is to prove the 
existence (and uniqueness) of strong solution of \eqref{eq1n}, as well as to extend  some results of the bilinear optimal control problem 
from  the quadratic case ($p=2$) to the subquadratic case  ($1<p<2$).
\begin{theorem}\label{existence_p}
Let $f\in L^{2+}(Q_c):=L^{2+}(0,T;L^{2+}(\Omega_c))$, $(u_0,v_0)\in H^{1}(\Omega)\times W^{1+,2+}(\Omega)$, with
$u_0\ge0$ and $v_0\ge 0$ a.e. in $\Omega$. Then, there exists a unique strong solution of system
(\ref{eq1n}) in $(0,T)$, that is, there exists a pair $(u,v)$ with $u\ge 0$ and $v\ge 0$ in $Q$ such that
$(u,v)\in X_2\times X_{2+}$ satisfying the system (\ref{eq1n}) pointwisely a.e.~$(t,x)\in Q,$
and the initial and boundary conditions \eqref{eq2}.
\end{theorem}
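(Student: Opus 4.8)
The plan is to follow the same scheme as the proof of Theorem~\ref{existence}, applying the Leray--Schauder fixed-point theorem to the decoupled operator $S$ of Section~\ref{sec:3}, now with production term $\bar u^2$ replaced by $\bar u^p$ (equivalently $\bar u_+^p$). First I would check that Lemma~\ref{compact} carries over with no change: since $\bar u\in L^{\infty-}(Q)$ we still have $\bar u^p\in L^{\infty-}(Q)\subset L^{2+}(Q)$, so the $L^{2+}$-parabolic regularity gives $v\in X_{2+}$, then $\nabla v\in L^{4+}(Q)$ by \eqref{2D-4}, then $\nabla\cdot(\bar u_+\nabla v)=\bar u_+\Delta v+\nabla\bar u_+\cdot\nabla v\in L^2(Q)$, hence $u\in X_2$; compactness and continuity of $S$ are unchanged. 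Likewise, in the a priori estimates for the $\alpha$-problem $\partial_tu-\Delta u=\nabla\cdot(u_+\nabla v)$, $\partial_tv-\Delta v+v=\alpha u^p+\alpha fv_+1_{\Omega_c}$, the positivity and conservation step ($u,v\ge0$, $\int_\Omega u(t)=m_0$), the bootstrap $u^{q/2}\in L^\infty(L^2)\cap L^2(H^1)$ for every $q<\infty$ (hence $u\in L^\infty(L^{\infty-})$), the regularity $v\in X_{2+}$ by $L^{2+}$-parabolic theory once $u\in L^\infty(L^{\infty-})$, and the estimate $u\in X_2$ obtained by testing the $u$-equation with $-\Delta u$, are verbatim copies of Steps~1,~3,~4,~5 of Lemma~\ref{estimates}, because the $u$-equation is untouched and the source of the $v$-equation still belongs to $L^{2+}(Q)$.

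The only genuinely new ingredient is the energy estimate replacing Step~2 of Lemma~\ref{estimates}. I would test the $u$-equation with $\alpha\,u^{p-1}$ and the $v$-equation with $-\tfrac{p-1}{p}\Delta v$. Integration by parts (using $\partial_{\bf n}v=0$ and $u=u_+\ge0$) gives $\big(\nabla\cdot(u\nabla v),u^{p-1}\big)=\tfrac{p-1}{p}(\Delta v,u^p)$, so the weight $\tfrac{p-1}{p}$ is exactly the one making the chemotaxis and production terms cancel, as in the quadratic case. Combining this with the $2D$ embedding $H^1(\Omega)\hookrightarrow L^q(\Omega)$ and the H\"older and Young inequalities applied to $\tfrac{p-1}{p}|(fv1_{\Omega_c},\Delta v)|\le\varepsilon\|\Delta v\|^2+C\|f\|_{L^{2+}}^2\|v\|_{H^1}^2$, I expect
\[
\frac{d}{dt}\Big(\tfrac{\alpha}{p}\|u^{p/2}\|^2+\tfrac{p-1}{2p}\|\nabla v\|^2\Big)+c\Big(\alpha\|\nabla u^{p/2}\|^2+\|\Delta v\|^2+\|\nabla v\|^2\Big)\le C\big(1+\|f\|_{L^{2+}}^2\big)\|v\|_{H^1}^2 .
\]
Since $\|v\|_{H^1}^2=\|\nabla v\|^2+(\int_\Omega v)^2$, it remains to control $(\int_\Omega v)^2$.

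For that, as in Remark~\ref{conser} and Step~2 of Lemma~\ref{estimates}, I would integrate the $v$-equation over $\Omega$ and multiply by $\tfrac12\int_\Omega v\ge0$; the term $\tfrac{\alpha}{2}(\int_\Omega v)\int_\Omega u^p$ is controlled by means of \eqref{G-N-2} applied to $u^{p/2}$, i.e. $\int_\Omega u^p=\|u^{p/2}\|^2\le C\big(\|\nabla u^{p/2}\|\,\|u^{p/2}\|_{L^1}+\|u^{p/2}\|_{L^1}^2\big)$, together with the bound $\|u^{p/2}\|_{L^1}=\int_\Omega u^{p/2}\le|\Omega|^{1-p/2}\big(\int_\Omega u\big)^{p/2}=|\Omega|^{1-p/2}m_0^{p/2}$. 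This last H\"older estimate is precisely the place where the restriction $p\le2$ is essential: for $p>2$ the moment $\int_\Omega u^{p/2}$ is not controlled by the conserved mass $m_0$. Hence $\int_\Omega u^p\le C(\|\nabla u^{p/2}\|+1)$, and Young's inequality gives $\tfrac{\alpha}{2}(\int_\Omega v)\int_\Omega u^p\le\delta\alpha\|\nabla u^{p/2}\|^2+C_\delta(\int_\Omega v)^2+C_\delta$, while $\tfrac{\alpha}{2}(\int_\Omega v)\int_{\Omega_c}fv$ is bounded, via \eqref{G-N-2} applied to $v$, by $\varepsilon\|\nabla v\|^2+C(1+\|f\|_{L^{2+}}^2)(\int_\Omega v)^2$.

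Adding this to the energy inequality, choosing $\delta,\varepsilon$ small so that the terms $\delta\alpha\|\nabla u^{p/2}\|^2$ and $\varepsilon\|\nabla v\|^2$ are absorbed, and applying Gronwall's lemma (the coefficient $C(1+\|f\|_{L^{2+}}^2)$ lies in $L^1(0,T)$), I would obtain, uniformly in $\alpha\in[0,1]$, the bound $\|\sqrt\alpha\,u^{p/2}\|_{L^\infty(L^2)\cap L^2(H^1)}+\|v\|_{L^\infty(H^1)\cap L^2(H^2)}\le C$. Then $\alpha u^p=(\sqrt\alpha\,u^{p/2})^2$ is bounded in $L^2(Q)$, since $\alpha\|u^p\|_{L^2(Q)}=\|\sqrt\alpha\,u^{p/2}\|_{L^4(Q)}^2\le C$ by the $2D$ embedding $L^\infty(L^2)\cap L^2(H^1)\hookrightarrow L^4(Q)$, hence $\partial_tv\in L^2(Q)$, $v\in X_2$, and Steps~3--5 proceed exactly as before; the resulting uniform bound in $X_2\times X_{2+}$ and the Leray--Schauder theorem produce a strong solution of \eqref{eq1n}--\eqref{eq2}. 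Uniqueness follows as in the quadratic case: subtracting the equations for two strong solutions $(u_1,v_1),(u_2,v_2)$, testing the $u$-difference equation with $u_1-u_2$ (using $\int_\Omega(u_1-u_2)=0$, so that $\|u_1-u_2\|_{H^1}^2\equiv\|\nabla(u_1-u_2)\|^2$ by \eqref{norm-1}) and the $v$-difference equation with $(v_1-v_2)-\Delta(v_1-v_2)$, the only new term $\big(u_1^p-u_2^p,(v_1-v_2)-\Delta(v_1-v_2)\big)$ is handled with the elementary inequality $|u_1^p-u_2^p|\le p\,(u_1^{p-1}+u_2^{p-1})|u_1-u_2|$ (valid for $u_1,u_2\ge0$, $p>1$), H\"older, $u_i\in X_2\hookrightarrow L^\infty(0,T;L^q(\Omega))$ for all $q<\infty$, and \eqref{2D-4}, which leads to a bound $\le\varepsilon\|v_1-v_2\|_{H^2}^2+C(t)\|u_1-u_2\|_{H^1}^2$ with $C\in L^\infty(0,T)$, after which Gronwall's lemma forces $u_1=u_2$, $v_1=v_2$. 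The main obstacle throughout is the simultaneous control of the energy and of $(\int_\Omega v)^2$, which hinges on bounding $\int_\Omega u^p$ by the conserved mass $m_0$ and the parabolic dissipation; this breaks down for $p>2$, which is why that range stays open.
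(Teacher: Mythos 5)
Your proposal is correct and follows essentially the same route as the paper: Leray--Schauder applied to the same decoupled operator, with the only genuinely new work in the weighted energy estimate where testing by $u^{p-1}$ and a suitable multiple of $-\Delta v$ cancels the chemotaxis and production terms, and where $\int_\Omega u^p$ is controlled through Gagliardo--Nirenberg and the conserved mass (the paper interpolates $\alpha^2\|u^{p/2}\|^4$ against $\|u^{p/2}\|_{L^{2/p}}=m_0^{p/2}$ and applies Young with exponent $p/(2(p-1))$, while you bound $\|u^{p/2}\|^2$ linearly in $\|\nabla (u^{p/2})\|$ via \eqref{G-N-2} and H\"older; both devices hinge on $p\le 2$ in the same way). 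You additionally spell out the uniqueness argument, which the paper's proof of Theorem \ref{existence_p} leaves implicit.
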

\begin{proof}
The proof is also carried out through the Leray-Schauder fixerd point Theorem. Let $1<p<2$. We consider the following function spaces
\begin{equation}\label{sys-2n}
W_{u,p}:=L^{\infty-}(Q)\cap  L^{(2p)-}(W^{1,(2p)-})\ \mbox{ and }\ W_v=L^{\infty-}(Q),
\end{equation}
and the  operator
$S_p:W_{u,p}\times W_v\rightarrow X_2\times X_{2+} \hookrightarrow W_u\times W_v$ by
$S_p(\bar{u},\bar{v})=(u,v)$  the solution of the uncoupled problem
\begin{equation}\label{sys-1n}
\left\{
\begin{array}{rcl}
\partial_tu-\Delta u&=&\nabla\cdot(\bar{u}_+\nabla v),\\
\partial_tv-\Delta v+v&=&\bar{u}^p+f\bar{v}_+1_{\Omega_c},
\end{array}
\right.
\end{equation}
endowed with \eqref{eq2}. 
Notice that, in $2D$ domains, the injection of $X_2\times X_{2+}$ in $W_{u,p}\times W_v$ is compact.
Also, observe that $W_{u,p}\hookrightarrow L^{(2p)+}(Q)$,  
hence $\bar{u}^p\in {L^{2+}(Q)},$ and therefore, following the same arguments of the proof of Lemma {\ref{compact}} we have that the operator $S_{p}$ is well-defined and completely continuous. Now, in order to prove that the set of possible fixed points 
$$S_{\alpha,p}:=\{(u,v)\in W_{u,p}\times W_v\,:\, (u,v)=\alpha S_p(u,v),\ \mbox{ for some }\alpha\in[0,1]\}$$
 is bounded (with respect to $\alpha$) in
$W_{u,p}\times W_v$, we argue as in the proof of Lemma \ref{estimates}.  Let $\alpha\in (0,1]$ (the case $\alpha=0$ is trivial). We consider $(u,v)\in S_{\alpha,p}$, then 
$(u,v)\in X_2\times X_{2+}$  and satisfies pointwisely a.e. in $Q$ the following problem
\begin{equation}\label{es-2n}
\left\{
\begin{array}{rcl}
\partial_tu-\Delta u&=&\nabla\cdot(u_+\nabla v),\\
\partial_tv-\Delta v+v&=&\alpha u^p+\alpha fv_+1_{\Omega_c},
\end{array}
\right.
\end{equation}
endowed with the  initial and boundary conditions \eqref{eq2}. Therefore it suffices to prove that $(u,v)$ is bounded in
$X_2\times X_{2+}$ independent of the parameter $\alpha$. For that, we divide the proof in five steps.

\

\underline{Step 1:} $u,v\ge 0$ and $\int_\Omega u(t)=m_0$. The proof is similar to the proof of Step~1 in Lemma \ref{estimates}.

\

\underline{Step 2:} $v$ is bounded in $X_2$ and 
$\sqrt{\alpha}u^{p/2}$ in $L^\infty(L^2)\cap L^2(H^1)$. 
\vspace{0.1cm}

By testing (\ref{es-2n})$_1$ by $\alpha u^{p-1}$ and (\ref{es-2n})$_2$ by $-\Delta v$; then
integrating by parts, adding the respective equations, chemotaxis and production terms cancel, and using the H\"older and Young inequalities, we have
\begin{eqnarray}\label{es-3n}
&&\frac{d}{dt}\left(\frac{\alpha}{p-1}\|u^{p/2}\|^2+\frac12\|\nabla v\|^2\right)+\frac{4\alpha}{p}\|\nabla (u^{p/2})\|^2+\frac 12\|\Delta v\|^2 +\|\nabla v\|^2\nonumber\\
&&\le C \, \|f\|_{L^{2+}}^2 \| v \|_{H^{1}}^2 .
\end{eqnarray}
On the other hand, integrating (\ref{es-2n})$_2$ in $\Omega$, we have
\begin{equation}\label{xyzn}
\frac{d}{dt}\left(\int_\Omega v\right)+\int_\Omega v=\alpha\int_\Omega u^p+\alpha\int_{\Omega_c} fv.
\end{equation}
Then, multiplying (\ref{xyzn}) by $\int_\Omega v$ we deduce that
\begin{equation}\label{es-5an}
\frac 12\frac{d}{dt}\left(\int_\Omega v\right)^2+\frac 12\left(\int_\Omega v\right)^2\leq {\alpha^2}\Vert u\Vert^{2p}_{L^p}+C\,\alpha^2 \Vert f\Vert^2_{L^{2+}}\Vert v\Vert_{H^1}^2.
\end{equation}
In this point, since $1<p<2,$ we use the Gagliardo-Nirenberg inequality (\ref{gagl-1}) and Young inequality in order to bound:
\begin{eqnarray}
{\alpha^2}\Vert u\Vert^{2p}_{L^p}&=&{\alpha^2}\Vert u^{p/2}\Vert^4 
\leq C\,\alpha^2\Vert \nabla (u^{p/2})\Vert^{4(p-1)/p}\Vert u^{p/2}\Vert_{L^{2/p}}^{4/p}
+C\,\alpha^2\Vert u^{p/2}\Vert^4_{L^{2/p}}\nonumber\\
&\leq& \frac{2\alpha}{p}\Vert \nabla (u^{p/2})\Vert^2+C\alpha^{2p/(2-p)}\Vert u\Vert_{L^1}^{4/(2-p)}+C\alpha^2\Vert u\Vert_{L^1}^{2p}.\label{zzn}
\end{eqnarray}
Therefore, adding (\ref{es-3n})-(\ref{es-5an}) and using (\ref{zzn}) and that $\Vert u\Vert_{L^1}=m_0$,  we have
\begin{eqnarray}\label{es-3zn}
&&\frac{d}{dt}\left(\frac{\alpha}{p-1}\|u^{p/2}\|^2+\frac12\|v\|_{H^1}^2\right)+\frac{2\alpha}{p}\|\nabla (u^{p/2})\|^2+\frac 12\| v\|_{H^2}^2\nonumber\\
&&\leq C \, \|f\|_{L^{2+}}^2 \| v \|_{H^{1}}^2 + C.
\end{eqnarray}
Therefore, from (\ref{es-3zn}) and Gronwall lemma we deduce
\begin{equation}\label{cot-1n}
\|v\|_{L^\infty(H^1)\cap L^2(H^2)}
+ \|\sqrt{\alpha}u^{p/2}\|_{L^\infty(L^2)\cap L^2(H^1)}
\le K_1(\|u_0\|_{L^p},\|v_0\|_{H^1},\|f\|_{L^{2+}(Q_c)}).
\end{equation}
Finally, from equation (\ref{es-2n})$_2$ and the previous estimates (in particular $v$ is bounded in $L^\infty(L^{\infty -})$ it holds that $f\, v1_{\Omega_c}$ is bounded in $L^2(Q),$ and $\sqrt{\alpha} u^{p/2}$ is bounded in $L^4(Q),$ which implies that $\alpha u^p$ is bounded in $L^{2}(Q)$. Therefore from \eqref{es-2n}$_2$ it holds that $\partial_t v$ is bounded in $L^2(Q)$, and thus, 
$v$ is bounded in $X_2$.

\

\underline{Step 3:} $u$ is bounded in $L^\infty(L^{\infty-})$ and $u^{q/2}$ in $L^2(H^1)$ for any $q<\infty$. 
\vspace{0.1cm}
The proof is similar to the proof of Step 3 in Lemma \ref{estimates}.


\

{\underline{Step 4:} $(u,v)$ is bounded in $X_{2}\times X_{2+}$. The proof is similar to the proof of Steps 4 and 5 in Lemma \ref{estimates}.
\vspace{0.1cm}
Thus, we conclude that all elements of the set $S_{\alpha,p}$ are bounded
in $X_2\times X_{2+}$ by a constant $R$ independently of $\alpha$.} \qed
\end{proof}
 In order to establish the optimal control results related to the superlinear (and sub-quadratic) case $1<p<2$, we consider the same  Banach spaces $X:={X}_2\times{X}_{2+}\times L^{2+}(Q_c)$ and $Y:=L^2(Q)\times L^{2+}(Q)$, and the same cost functional $J:X\rightarrow\mathbb{R}$ defined in (\ref{opt-2}). The state operator 
$G_p=(\widetilde{G}_1,\widetilde{G}_2):X\rightarrow Y$, which at each point 
$(u,v,f)\in X$ is now defined by 
\begin{equation}\label{opt-3p}
\left\{
\begin{array}{rcl}
\widetilde{G}_1(u,v,f)&=&\partial_tu-\Delta u-\nabla\cdot(u\nabla v),\\
\widetilde{G}_2(u,v,f)&=&\partial_tv-\Delta v+v-u^p-fv1_{\Omega_c},
\end{array}
\right.
\end{equation}
with $1<p<2.$ Then, the optimal control problem for the superlinear case reads:
\begin{equation}\label{opt-4p}
\min_{(u,v,f)\in M}J(u,v,f)\ \mbox{ subject to }\ G_p(u,v,f)={ 0},
\end{equation}
where 
$$
M:=(\hat{u},\hat{v},\hat{f})+\widehat{X}_2\times\widehat{X}_{2+}\times(\mathcal{F}-\hat{f}),
$$
with $(\hat{u},\hat{v})$ the global strong solution of (\ref{eq1})-(\ref{eq2}) associated to a 
$\hat{f}\in\mathcal{F}$. 
Then, the existence of optimal solution is given by the following theorem. Its proof is similar to the proof of Theorem \ref{optimal_solution}; therefore, we omit it.
\begin{theorem}[Global optimal solution] \label{optimal_solution_p}
Under hypothesis of Theorem \ref{existence_p}, if 
we assume that either $\gamma_f>0$ or $\mathcal{F}$
is bounded in  $L^{2+}(Q_c)$, then the optimal control problem (\ref{opt-4p}) hast at least one global optimal
solution, that is, there exists $(\tilde{u},\tilde{v},\tilde{f})\in\mathcal{S}_{ad}$ such that
\begin{eqnarray}\label{opsol_p}
J(\tilde{u},\tilde{v},\tilde{f})=\min_{(u,v,f)\in\mathcal{S}_{ad}}J(u,v,f).
\end{eqnarray}
\end{theorem}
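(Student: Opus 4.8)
The plan is to transcribe the proof of Theorem~\ref{optimal_solution}, adjusting only the functional framework in which the superlinear production is handled. By Theorem~\ref{existence_p} there is a strong solution associated to $\hat f\in\mathcal F$, so the admissible set $\mathcal S_{ad}$ of \eqref{opt-4p} is nonempty and one may take a minimizing sequence $\{x_m\}_{m\in\mathbb N}=\{(u_m,v_m,f_m)\}_{m\in\mathbb N}\subset\mathcal S_{ad}$ with $J(x_m)\to\inf_{x\in\mathcal S_{ad}}J(x)$; by definition of $\mathcal S_{ad}$, each $(u_m,v_m)$ is a strong solution of \eqref{eq1n}-\eqref{eq2} with control $f_m$ and $1<p<2$.

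First I would extract uniform bounds. From the structure of $J$ in \eqref{opt-2} together with the hypothesis $\gamma_f>0$ (or $\mathcal F$ bounded in $L^{2+}(Q_c)$), the sequence $\{f_m\}$ is bounded in $L^{2+}(Q_c)$. The uniform estimate established inside the proof of Theorem~\ref{existence_p} (the constant $R$ depending only on $T$, $\|u_0\|_{H^1}$, $\|v_0\|_{W^{1+,2+}}$ and a bound for $\|f_m\|_{L^{2+}(Q_c)}$) then gives $\|(u_m,v_m)\|_{X_2\times X_{2+}}\le C$ uniformly in $m$. Hence, up to a subsequence, $f_m\rightharpoonup\tilde f$ weakly in $L^{2+}(Q_c)$, $(u_m,v_m)\rightharpoonup(\tilde u,\tilde v)$ weakly in $X_2\times X_{2+}$ and, by the compact embedding $X_2\times X_{2+}\hookrightarrow W_{u,p}\times W_v$ noted in the proof of Theorem~\ref{existence_p}, strongly in $W_{u,p}\times W_v$, i.e.\ $u_m\to\tilde u$ in $L^{\infty-}(Q)\cap L^{(2p)-}(W^{1,(2p)-})$ and $v_m\to\tilde v$ in $L^{\infty-}(Q)$; using \eqref{2D-4} one also has $\nabla v_m\rightharpoonup\nabla\tilde v$ weakly in $L^{4+}(Q)$. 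Since $\mathcal F$ is closed and convex (hence weakly closed) $\tilde f\in\mathcal F$, and since $\widehat X_2\times\widehat X_{2+}$ is a closed subspace the affine structure of $M$ passes to the limit, so $\tilde x:=(\tilde u,\tilde v,\tilde f)\in M$.

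Next I would pass to the limit in the state system. The linear terms converge by weak convergence in $X_2\times X_{2+}$; the chemotaxis term $\nabla\cdot(u_m\nabla v_m)$ converges since $u_m\to\tilde u$ strongly in $L^{\infty-}(Q)$ and $\nabla v_m\rightharpoonup\nabla\tilde v$ weakly in $L^{2}(Q)$; the production term converges because $u_m\to\tilde u$ strongly in $L^{(2p)-}(Q)$ forces $u_m^p\to\tilde u^p$ in $L^{2-}(Q)$; and the bilinear term $f_m v_m1_{\Omega_c}$ converges using $f_m\rightharpoonup\tilde f$ in $L^{2+}(Q_c)$ together with $v_m\to\tilde v$ strongly in $L^{\infty-}(Q)$. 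Thus $(\tilde u,\tilde v)$ satisfies \eqref{eq1n} pointwise a.e.\ in $Q$, the signs $\tilde u\ge0$, $\tilde v\ge0$ and the conditions \eqref{eq2} are inherited in the limit, so $\tilde x\in\mathcal S_{ad}$. Finally, $J$ is weakly lower semicontinuous on $\mathcal S_{ad}$ (the first two terms are continuous and convex, the last convex and continuous on $L^{2+}(Q_c)$), whence
\[
J(\tilde x)\le\liminf_{m\to\infty}J(x_m)=\inf_{x\in\mathcal S_{ad}}J(x)\le J(\tilde x),
\]
which is precisely \eqref{opsol_p}.

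The only genuinely delicate point, and the one I expect to be the main obstacle, is the passage to the limit in the superlinear term $u_m^p$ jointly with the chemotaxis term: this is exactly where the compact embedding of $X_2\times X_{2+}$ into $W_{u,p}\times W_v$ (an Aubin--Lions-type compactness already encoded in the choice of $W_{u,p}=L^{\infty-}(Q)\cap L^{(2p)-}(W^{1,(2p)-})$, which in turn relies on the estimate $\sqrt\alpha\,u^{p/2}\in L^\infty(L^2)\cap L^2(H^1)$ from Step~2 of Theorem~\ref{existence_p}) is used in an essential way; the remaining arguments are a routine transcription of the proof of Theorem~\ref{optimal_solution}.
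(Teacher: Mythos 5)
Your proposal is correct and takes essentially the same route as the paper: the paper omits this proof entirely, stating only that it is similar to that of Theorem \ref{optimal_solution}, and your transcription (uniform bounds from the coercivity assumption on $J$, the a priori estimate from Theorem \ref{existence_p}, weak-strong compactness via $X_2\times X_{2+}\hookrightarrow W_{u,p}\times W_v$, passage to the limit in the nonlinear terms, and weak lower semicontinuity of $J$) is precisely the intended argument. Your identification of the superlinear production term as the only point requiring adjustment relative to the quadratic case is also accurate.
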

The following result 
guarantees the existence and regularity of Lagrange multipliers for the optimal control problem (\ref{opt-4p}). Its proof is a slight modification of the proof of Theorems \ref{Lagrange} and \ref{regu-lagrange}; therefore we omit it.
\begin{theorem}[Existence of Lagrange multipliers]\label{Lagrangep}
Let $(\tilde{u},\tilde{v},\tilde{f})\in\mathcal{S}_{ad}$ be a local optimal solution for the bilinear control problem (\ref{opt-4p}). Then,
there exist Lagrange multipliers $(\lambda,\eta)\in L^2(Q)\times (L^{2+}(Q))'$ such that  the following variational inequality holds
\begin{eqnarray}\label{lag-1}
&&\gamma_u\int_0^T\int_{\Omega_d} (\tilde{u}-u_d)U+\gamma_v\int_0^T\int_{\Omega_d}(\tilde{v}-v_d)V
+\gamma_f\int_0^T\int_{\Omega_c}{\rm sgn}(\tilde{f})|\tilde{f}|^{1+}F\nonumber\\
&&+\int_0^T\int_\Omega\left(\partial_tU-\Delta U-\nabla\cdot(U\nabla\tilde{v})-\nabla\cdot(\tilde{u}\nabla V)\right)\lambda\nonumber\\
&&+\int_0^T\int_\Omega\left(\partial_tV-\Delta V+V-p\tilde{u}^{p-1}U-\tilde{f}V1_{\Omega_c}\right)\eta
+\int_0^T\int_{\Omega_c}F\tilde{v}\eta\ge0,
\end{eqnarray}
for all 
$(U,V,F)\in \widehat{X}_2\times\widehat{X}_{2+}\times\mathcal{C}(\tilde{f})$, 
where 
$\mathcal{C}(\tilde{f}):=\{\theta(f-\tilde{f})\,:\, \theta\ge0,\, f\in\mathcal{F}\}$ is the conical hull  of $\tilde{f}$ in
$\mathcal{F}$.
In addition, the Lagrange multipliers  $(\lambda,\eta)$
have the following regularity 
\begin{equation}\label{regu-1p}
(\lambda,\eta)\in X_2\times X_{(4/3)+},
\end{equation}
and satisfy the adjoint system
\begin{equation}\label{adj-superlinear}
\left\{
\begin{array}{rcl}
-\partial_t\lambda-\Delta\lambda+\nabla\lambda\cdot\nabla\tilde{v}
+p\tilde{u}^{p-1}\eta&=&\gamma_u(\tilde{u}-u_d)1_{\Omega_d}\ \mbox{ in }\ Q,\\
-\partial_t\eta-\Delta\eta-\nabla\cdot(\tilde{u}\nabla\lambda)+\eta-\tilde{f}\eta 1_{\Omega_c}&=&\gamma_v(\tilde{v}-v_d)1_{\Omega_d}\ \mbox{ in }\ Q,\\
\lambda(T)&=&0,\ \eta(T)=0\ \mbox{ in }\ \Omega,\\
\dfrac{\partial\lambda}{\partial{\bf n}}&=&0,\ \dfrac{\partial\eta}{\partial{\bf n}}=0\ \mbox{ in }(0,T)\times\partial\Omega.
\end{array}
\right.
\end{equation}
\end{theorem}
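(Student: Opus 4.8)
The plan is to mirror, step by step, the argument already carried out for the quadratic case in Theorems \ref{existence}, \ref{Lagrange} and \ref{regu-lagrange}, pointing out only the places where the nonlinearity $u^p$ (rather than $u^2$) changes the estimates. First I would set up the Lagrange multiplier framework exactly as before: the functional $J$ is Fr\'echet differentiable with derivative \eqref{derJ}, and the operator $G_p=(\widetilde G_1,\widetilde G_2)$ is continuously Fr\'echet differentiable with $\widetilde G_1'(\tilde x)[s]=\partial_tU-\Delta U-\nabla\cdot(U\nabla\tilde v)-\nabla\cdot(\tilde u\nabla V)$ and $\widetilde G_2'(\tilde x)[s]=\partial_tV-\Delta V+V-p\,\tilde u^{p-1}U-\tilde fV1_{\Omega_c}-F\tilde v$. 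The only new term is $p\,\tilde u^{p-1}U$ in place of $2\tilde uU$; since Theorem \ref{existence_p} gives $\tilde u\in X_2\hookrightarrow L^{\infty-}(Q)$, one has $\tilde u^{p-1}\in L^{\infty-}(Q)$ as well (here $p-1>0$), so this term is at least as well-behaved as the quadratic one and the differentiability computation goes through verbatim.

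Next I would prove that any admissible $\tilde x=(\tilde u,\tilde v,\tilde f)\in\mathcal S_{ad}$ is a regular point of $G_p$, i.e.\ solve the linearized system obtained from \eqref{reg-1} with $2\tilde u\,U$ replaced by $p\,\tilde u^{p-1}U$. The proof of Lemma \ref{regular} applies with essentially no change: in the energy estimate one uses $\|\tilde u^{p-1}U\|\le\|\tilde u^{p-1}\|_{L^{\infty-}}\|U\|_{L^{1+}}$ or, more in the spirit of that proof, bounds $\tilde u^{p-1}$ in $L^{4-}(Q)$ (which follows from $\tilde u\in L^{\infty-}(Q)$) and uses the $2D$ interpolation inequality \eqref{2D-4} exactly as in the chain leading to \eqref{reg-5}. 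Once $U$ is bounded in $L^4(Q)$ and $V$ in $L^\infty(L^{\infty-})$, the right-hand side $p\,\alpha\,\tilde u^{p-1}U+\alpha\tilde fV1_{\Omega_c}+\alpha g_v$ is bounded in $L^{2+}(Q)$, $L^{2+}$-parabolic regularity gives $V\in X_{2+}$, and the $-\Delta U$ estimate leading to \eqref{cot-7a} yields $U\in X_2$; the Leray--Schauder fixed point theorem then produces a solution and a comparison argument gives uniqueness. With regularity of $\tilde x$ established, the existence of $(\lambda,\eta)\in L^2(Q)\times(L^{2+}(Q))'$ satisfying \eqref{lag-1} follows from \cite[Theorem 3.1]{Zowe} (equivalently \cite[Theorem 6.3, p.~330]{Troltzsch}) together with \eqref{derJ}, and splitting \eqref{lag-1} into the directions $(U,0,0)$, $(0,V,0)$, $(0,0,F)$ identifies the very-weak adjoint system \eqref{adj-superlinear}.

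Finally, for the extra regularity \eqref{regu-1p} I would repeat the formal argument of the proof of Theorem \ref{regu-lagrange} on the time-reversed system: test the $\tilde\lambda$-equation by $\tilde\lambda-\Delta\tilde\lambda$ and the $\tilde\eta$-equation by $\tilde\eta$; the term $p\,\tilde u^{p-1}\tilde\eta$ is controlled by $\|\tilde u^{p-1}\|_{L^4}^4\|\tilde\eta\|^2$ plus an absorbed $\|\tilde\eta\|_{H^1}^2$, and since $\tilde u\in L^{\infty-}(Q)$ this is no worse than the quadratic term $2\tilde u\tilde\eta$. Gronwall then gives $(\tilde\lambda,\tilde\eta)\in L^\infty(H^1\times L^2)\cap L^2(H^2\times H^1)$; in particular $\nabla\tilde\lambda\cdot\nabla\tilde v\in L^2(Q)$ and $p\,\tilde u^{p-1}\tilde\eta\in L^2(Q)$ (using $\tilde u^{p-1}\in L^{\infty-}(Q)$ and $\tilde\eta\in L^4(Q)$), so $\partial_s\tilde\lambda\in L^2(Q)$ and $\tilde\lambda\in X_2$; and $\tilde f\tilde\eta1_{\Omega_c}\in L^{(4/3)+}(Q)$ together with $\nabla\cdot(\tilde u\nabla\tilde\lambda)=\tilde u\Delta\tilde\lambda+\nabla\tilde u\cdot\nabla\tilde\lambda\in L^{2-}(Q)\hookrightarrow L^{(4/3)+}(Q)$ and $L^{(4/3)+}$-parabolic regularity give $\tilde\eta\in X_{(4/3)+}$. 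Identifying this regular solution with the multiplier $(\lambda,\eta)$ by solving \eqref{reg-1} with data $g_u=\lambda-\hat\lambda$ and $g_v=\mathrm{sgn}(\eta-\hat\eta)|\eta-\hat\eta|^{1+}$ and subtracting, as in the quadratic case, completes the proof. The only genuinely delicate point is the first-order estimate in the regularity step: one must make sure that raising $u$ to the power $p-1$ (as opposed to $1$) does not cost anything, which is guaranteed precisely because Theorem \ref{existence_p} already puts $u$ in $L^{\infty-}(Q)$ — so the exponent $p-1\in(0,1)$ only helps. Everything else is a routine transcription of Sections \ref{sec:3} and \ref{sec:4}.
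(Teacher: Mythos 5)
Your proposal is correct and follows exactly the route the paper intends: the paper omits this proof entirely, saying only that it is ``a slight modification of the proof of Theorems \ref{Lagrange} and \ref{regu-lagrange}'', and your write-up is precisely that modification, the one new ingredient being that $\tilde{u}^{p-1}\in L^{\infty-}(Q)$ (since $\tilde{u}\ge 0$ lies in $L^{\infty-}(Q)$ and $0<p-1<1$), so the term $p\,\tilde{u}^{p-1}U$ is handled exactly as $2\tilde{u}U$ was in the quadratic case. There is no discrepancy with the paper's argument.
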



\end{document}